\theoremstyle{plain} 
\newtheorem{theorem}{Theorem}[section]
\newtheorem{cor}[theorem]{Corollary}
\newtheorem{lemma}[theorem]{Lemma}
\theoremstyle{definition}
\newtheorem{definition}[theorem]{Definition}
\theoremstyle{remark}
\begin{document}

\title[Subanalytic triangulation]
{A subanalytic triangulation theorem for real analytic orbifolds}

\author{Marja Kankaanrinta}
\address{Department of Mathematics \\
         P.O. Box 400137\\ University of Virginia\\
         Charlottesville VA 22904 - 4137\\ U.S.A.}
\email{mk5aq@@virginia.edu}

\date{\today}

\subjclass[2010]{57R05, 57R18}

\keywords{subanalytic, triangulation, orbifold}

\begin{abstract}  Let $X$ be a real analytic orbifold. Then each
stratum of $X$ is a subanalytic subset of $X$. We show that
$X$ has a unique subanalytic triangulation compatible with the
strata of $X$. We also show that every ${\rm C}^r$-orbifold,
$1\leq r\leq \infty$, has a real analytic structure. 
This allows us to triangulate differentiable orbifolds.
The results generalize the subanalytic triangulation theorems
previously known for quotient orbifolds.
\end{abstract}

\maketitle
\section{Introduction}
\label{intro}
\noindent    This paper concerns with subanalytic triangulations of real analytic orbifolds.
To every point $x$ in a real analytic orbifold  $X$ of dimension $n$ one can
associate a local group $G_x$, which is unique up to isomorphism.
The sets $X_{(H)}=\{ x\in X\mid G_x\cong H\}$, where $H$ is a finite group,
form a subanalytic stratification for $X$. 
The main result, Theorem \ref{thetheorem},  says that
every real analytic orbifold $X$ has a unique subanalytic triangulation
compatible with the strata of $X$.

Orbit spaces of real analytic manifolds by real analytic proper almost free
actions of Lie groups are called {\it quotient orbifolds}. 
Subanalytic 
triangulations of orbit spaces of proper real analytic  $G$-manifolds 
where $G$ is a Lie group are already known, see \cite{MS1} and Theorem 7.7
in \cite {Ka}, as well as Theorem 3.3 in \cite{MS2} for the case of a compact Lie group.
These results cover the case of quotient orbifolds.  The result of this paper
generalizes the previous results by providing a subanalytic triangulation for all 
real analytic orbifolds.

We also show that every ${\rm C}^r$-orbifold, $1\leq r\leq \infty$,  can be
given a real analytic structure, by first giving a ${\rm C}^\infty$ structure to
all ${\rm C}^r$-orbifolds, $1\leq r<\infty$, and then equipping every
${\rm C}^\infty$-orbifold with a real analytic structure
(Theorems \ref{comba1} and \ref{combanal}). By using the
subanalytic triangulation result for real analytic orbifolds, we obtain a
"subanalytic" triangulation compatible with the strata  for
any ${\rm C}^r$-orbifold $X$, $1\leq r\leq\infty$. "Subanalytic"
triangulations are known to exist for orbit spaces, see \cite{MS2}.

\section{Preliminaries}
\label{preli}
\noindent   

\noindent We begin by recalling the definition and some basic properties
of an orbifold.

\begin{definition}
\label{ensin}
Let $X$ be a topological space and let $n> 0$.
{\begin{enumerate}
\item An $n$-dimensional {\it orbifold chart} for an open subset $V$
of  $X$ is a triple
$(\tilde{V}, G,\varphi)$ such that
{\begin{enumerate}
\item $\tilde{V}$ is a connected open subset of ${\mathbb{R}}^n$,

\item $G$ is a finite group of homeomorphisms acting  on $\tilde{V}$, let
${\rm ker}(G)$ denote the subgroup of $G$ acting trivially on $\tilde{V}$.

\item $\varphi\colon \tilde{V}\to V$ is a $G$-invariant map inducing a
homeomorphism from $\tilde{V}/G$ onto  $V$.
\end{enumerate}}

\item  If $V_i\subset V_j$, an {\it embedding} $(\lambda_{ij}, h_{ij})
\colon (\tilde{V}_i, G_i, \varphi_i)\to
(\tilde{V}_j, G_j,\varphi_j)$ between two orbifold charts is 
{\begin{enumerate}
\item an injective homomorphism $h_{ij}\colon G_i\to G_j$, 
such that $h_{ij}$ is an isomorphism from ${\rm ker}(G_i)$ to
${\rm ker}(G_j)$,
and
\item an equivariant embedding
$\lambda_{ij}\colon \tilde{V}_i\to \tilde{V}_j$ such that $\varphi_j\circ
\lambda_{ij}=\varphi_i$. (The equivariantness means that 
$\lambda_{ij}(gx)=h_{ij}(g)\lambda_{ij}(x)$ for every $g\in G_i$
and every $x\in \tilde{V}_i$.)
\end{enumerate}}

\item  An {\it orbifold atlas} on $X$ is a family ${\mathcal V}=\{ (\tilde{V_i}, G_i, \varphi_i)\}_{
i\in I}$ of orbifold charts such that
{\begin{enumerate}
\item $\{ V_i\}_{i\in I}$ is a covering of $X$,

\item  given two charts $(\tilde{V}_i, G_i, \varphi_i)$ and
$(\tilde{V}_j, G_j, \varphi_j)$ and a point $x\in V_i\cap V_j$, there exists
an open neighborhood $V_k\subset V_i\cap V_j$ of $x$ and a chart
$(\tilde{V}_k, G_k, \varphi_k)$ such that there are embeddings 
$(\lambda_{ki}, h_{ki})\colon (\tilde{V}_k,G_k, \varphi_k)\to
(\tilde{V}_i, G_i,\varphi_i)$ and
$(\lambda_{kj}, h_{kj})\colon (\tilde{V}_k,G_k, \varphi_k)\to
(\tilde{V}_j, G_j,\varphi_j)$.
\end{enumerate}}

\item An atlas ${\mathcal U}$ is called a {\it refinement} of another atlas
${\mathcal W}$ if for every chart in ${\mathcal U}$ there exists an embedding
into some chart of ${\mathcal W}$. Two orbifold atlases 
having a common refinement are called
{\it equivalent}. 
\end{enumerate}}
\end{definition}
 
 \begin{definition}
 An $n$-dimensional {\it orbifold} is a paracompact Hausdorff space $X$
 equipped with an equivalence class of $n$-dimensional orbifold atlases.
 \end{definition}

 The sets $V\in {\mathcal V}$ are called {\it basic open sets} in $X$.
 
 An orbifold $X$ is called {\it reduced}, if  for every orbifold chart $(
 \tilde{V}, G, \varphi)$ of $X$, the group $G$ acts effectively on 
 $\tilde{V}$.

 An orbifold is  called a {\it  ${\rm C}^r$-orbifold},  $1\leq r\leq\omega$
 (where ${\rm C}^\infty$ means smooth and ${\rm C}^\omega$ means real
 analytic), if each $G_i$ acts via ${\rm C}^r$-diffeomorphisms on 
 $\tilde{V}_i$ and if each embedding $\lambda_{ij}\colon {\tilde{V}}_i\to
 \tilde{V}_j$ is differentiable of degree $r$.

An $n$-dimensional orbifold $X$  is called {\it locally smooth}, if 
 for each $x\in X$ there is an orbifold chart $(\tilde{U}, G,\varphi)$
 with  $x\in U=\varphi(\tilde{U})$ and such that
 the action of $G$ on $\tilde{U}\cong {\mathbb{R}}^n$  
 is orthogonal.  By the differentiable
 slice theorem (see Proposition 2.2.2 in \cite{Pa2} and, for the real 
 analytic case, Theorem 2.5 in  \cite{Ka}),
 every ${\rm C}^r$-orbifold, $1\leq r\leq \omega$, is locally smooth.

We assume that every  orbifold  has only countably many
connected components. It follows that our orbifolds are second countable.
Moreover, all orbifolds are paracompact, and
for any orbifold, the dimension equals the covering dimension
(i.e., the topological dimension).

In Section \ref{triangle} we will make use of the following result, originally due to J. Milnor.

\begin{theorem}
\label{palais}
Let $X$ be a paracompact space with covering dimension $n$ and let
$\{ U_\alpha\}$ be an open cover of $X$. Then there is an open cover
$\{ O_{i\beta}\mid \beta\in B_i, i=0,\ldots, n\}$ of $X$ refining $\{ U_\alpha\}$
such that $O_{i\beta}\cap O_{i\beta'}=\emptyset$ if $\beta\not=\beta'$.
\end{theorem}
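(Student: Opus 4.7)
The overall strategy is to combine paracompactness with the covering-dimension hypothesis to extract a locally finite open refinement of $\{U_\alpha\}$ whose order is at most $n+1$, and then to slice it into $n+1$ pairwise disjoint subfamilies by means of a subordinate partition of unity.

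The first step is to produce a locally finite open refinement $\{V_\alpha\}_{\alpha\in A}$ of $\{U_\alpha\}$ of order at most $n+1$, i.e.\ no point of $X$ lies in more than $n+1$ of the sets $V_\alpha$. For paracompact Hausdorff spaces, the existence of such a refinement is a standard equivalent formulation of having covering dimension $\leq n$. Choose a continuous partition of unity $\{\phi_\alpha\}_{\alpha\in A}$ subordinate to $\{V_\alpha\}$, and for each nonempty $S\subset A$ with $|S|\leq n+1$ define
\[
O_S=\Bigl\{\, x\in X : \min_{\alpha\in S}\phi_\alpha(x) > \max_{\beta\in A\setminus S}\phi_\beta(x)\,\Bigr\}.
\]
Local finiteness of $\{V_\alpha\}$ makes $O_S$ open, because in a neighborhood of any point only finitely many of the $\phi_\beta$ are nonzero, so locally both extrema reduce to min and max over a finite family of continuous functions. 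Since $x\in O_S$ forces $\phi_\alpha(x)>0$ for each $\alpha\in S$, one has $O_S\subset V_\alpha$ for every $\alpha\in S$, so $\{O_S\}$ refines $\{U_\alpha\}$.

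Two combinatorial properties remain. For disjointness within a single level, suppose $S\neq S'$ with $|S|=|S'|$ and choose $\alpha\in S\setminus S'$ and $\alpha'\in S'\setminus S$; a point of $O_S\cap O_{S'}$ would satisfy both $\phi_\alpha>\phi_{\alpha'}$ and $\phi_{\alpha'}>\phi_\alpha$, a contradiction. For covering, given $x\in X$ set $S_x=\{\alpha : \phi_\alpha(x)>0\}$; then $1\leq |S_x|\leq n+1$ and $\min_{\alpha\in S_x}\phi_\alpha(x)>0=\max_{\beta\notin S_x}\phi_\beta(x)$, so $x\in O_{S_x}$. The desired indexed family is then obtained by taking $B_i=\{S\subset A : |S|=i+1\}$ for $i=0,\ldots,n$ and relabeling $O_S$ as $O_{i\beta}$ with $\beta=S\in B_i$; the disjointness at each level $i$ is exactly the property verified above.

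The main obstacle is really the first step: extracting a locally finite open refinement whose order realizes the covering-dimension bound. This combined refinement theorem for paracompact spaces is standard but nontrivial, and it must be invoked carefully; once it is in hand, the partition-of-unity construction above does the remaining combinatorial work essentially automatically.
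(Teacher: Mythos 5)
Your proof is correct; the paper itself only cites Palais (Theorem 1.8.2) for this statement, and your argument is essentially the standard Milnor--Palais proof being cited: refine to a locally finite cover of order $\leq n+1$, take a subordinate partition of unity $\{\phi_\alpha\}$, and form the sets $O_S=\{x:\min_{\alpha\in S}\phi_\alpha(x)>\max_{\beta\notin S}\phi_\beta(x)\}$ indexed by unordered $(i+1)$-element subsets $S$. This matches exactly the remark the paper makes after the theorem, that each $B_i$ is the set of unordered $(i+1)$-tuples from the indexing set, so your construction reproduces the intended proof.
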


\begin{proof}
\cite{Pa}, Theorem 1.8.2.
\end{proof}

In the proof of the previous theorem each set $B_i$ is the set of unordered
$(i+1)$-tuples from the indexing set of the $ U_\alpha$. Thus, if the indexing
set of the $U_\alpha$ is countable, it follows that we can assume each $B_i$
to be countable.

A map $f\colon X\to Y$ between two ${\rm C}^r$-orbifolds is called a ${\rm C}^r$
{\it orbifold map}  if for every $x\in X$ there are orbifold charts $(\tilde{U}, G,\varphi)$
and $(\tilde{V}, H,\psi)$, where $x\in U$ and $f(x)\in V$,  a homomorphism
$\theta\colon G\to H$ and a $\theta$-equivariant map $\tilde{f}\colon
\tilde{U}\to\tilde{V}$ such that the following diagram commutes:

$$\begin{CD}
\tilde{U}
@>\tilde{f}>>  \tilde{V}\\
@VVV     @VVV\\
\tilde{U}/G
@>{}>>  \tilde{V}/H\\
@VVV     @VVV\\
 U      
 @>f\vert U>>   V\\
\end{CD}.
$$

A ${\rm C}^r$-map $f\colon X\to Y$ is called a {\it ${\rm C}^r$-diffeomorphism},
if it is a bijection and if the inverse map $f^{-1}$ is a ${\rm C}^r$-map.

\section{Orbifold stratification}
\label{orbstrat}

\noindent 
Let $X$ be a ${\rm C}^r$-orbifold, $1\leq r\leq\omega$.
Let $x\in X$ and let $(\tilde{V}, G, \varphi)$ and $(\tilde{U}, H,
\psi)$ be orbifold charts such that $x\in \varphi(\tilde{V})\cap
\psi(\tilde{U})$. Let $\tilde{x}\in \tilde{V}$ and $\tilde{y}\in\tilde{U}$
be such that $x=\varphi(\tilde{x})=\psi(\tilde{y})$. Let $G_{\tilde{x}}$ and
$H_{\tilde{y}}$ be the isotropy subgroups at $\tilde{x}$ and
$\tilde{y}$, respectively. Then $G_{\tilde{x}}$ 
and $H_{\tilde{y}}$ are isomorphic. 
It follows that it is possible to associate to
every point $x\in X$ a finite group $G_x$,
well-defined up to an isomorphism of groups, and  called the {\it local group} of $x$.

For a finite group $H$, we let
$$
X_{(H)}=\{ x\in X\mid G_x \cong H\}.
$$
The sets $X_{(H)}$ are called the {\it strata} of $X$. 

We point out, that if $X$ is an $n$-dimensional reduced orbifold, 
then $X$ can be considered as the orbit space of a manifold with the
action of the orthogonal group ${\rm O}(n)$. Thus, in this case, the local
groups are defined up to conjugacy by an element in ${\rm O}(n)$.

\section{Subanalytic subsets of real analytic orbifolds}

\noindent For subanalytic subsets of real analytic manifolds, see
for example \cite{BM} and \cite{Hi1}. Subanalytic subsets of real 
analytic orbifolds were introduced in \cite{Ka2}. We recall the
definitions.

\begin{definition}
\label{subset}
Let $X$ be a real analytic orbifold. A subset $A$ of $X$ is called
{\it subanalytic} if  for every  point $x$ of $X$ there is an 
orbifold chart $(\tilde{V},G,\varphi)$ of $X$ such that
$x\in V=\varphi(\tilde{V})$ and $\varphi^{-1}(A\cap V)$ is a 
subanalytic subset of $\tilde{V}$. 
\end{definition}

Subanalytic orbifold maps are defined in the same way as
${\rm C}^r$-maps, $1\leq r\leq\omega$:

\begin{definition}
\label{submap}
A  map $f\colon X\to Y$ between two real analytic orbifolds is 
called  
 {\it subanalytic}
if for every $x\in X$ there are orbifold charts $(\tilde{U}, G,\varphi)$
and $(\tilde{V}, H,\psi)$, where $x\in U$ and $f(x)\in V$,  a homomorphism
$\theta\colon G\to H$ and a $\theta$-equivariant 
subanalytic map $\tilde{f}\colon
\tilde{U}\to\tilde{V}$ making the following diagram commute:

$$\begin{CD}
\tilde{U}
@>\tilde{f}>>  \tilde{V}\\
@VVV     @VVV\\
\tilde{U}/G
@>{}>>  \tilde{V}/H\\
@VVV     @VVV\\
 U      
 @>f\vert U>>   V\\
\end{CD}.
$$
\end{definition}

\begin{lemma}
\label{substrata}
Let $X$ be a real analytic orbifold. Then all the strata $X_{(H)}$ are
subanalytic subsets of $X$.
\end{lemma}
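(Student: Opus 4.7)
The plan is to verify subanalyticity locally, using the definition: for every $x\in X$ we must exhibit an orbifold chart $(\tilde{V},G,\varphi)$ containing $x$ such that $\varphi^{-1}(X_{(H)}\cap V)$ is subanalytic in $\tilde{V}$. Since the paper has already observed that every real analytic orbifold is locally smooth, we may take $(\tilde{V},G,\varphi)$ with $G$ acting orthogonally (in particular, real analytically) on $\tilde{V}\subset\mathbb{R}^n$.

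The key identification is that, by the well-definedness of the local group recalled in Section \ref{orbstrat}, for any $\tilde{y}\in\tilde{V}$ the local group of $\varphi(\tilde{y})$ is isomorphic to the isotropy subgroup $G_{\tilde{y}}$. Therefore
\[
\varphi^{-1}(X_{(H)}\cap V) \;=\; \{\tilde{y}\in\tilde{V}\mid G_{\tilde{y}}\cong H\}.
\]
So the problem reduces to the following classical statement for a finite group $G$ acting real analytically on the real analytic manifold $\tilde{V}$: for each subgroup $K\leq G$, the set of points whose isotropy equals (not merely contains) $K$ is subanalytic.

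To see this, note that for each subgroup $K\leq G$ the fixed set $\tilde{V}^K=\{\tilde{y}\in\tilde{V}\mid k\tilde{y}=\tilde{y}\text{ for all }k\in K\}$ is a real analytic subset of $\tilde{V}$, being the common zero set of the real analytic maps $\tilde{y}\mapsto k\tilde{y}-\tilde{y}$. Then
\[
\{\tilde{y}\in\tilde{V}\mid G_{\tilde{y}}=K\} \;=\; \tilde{V}^K \;\setminus\; \bigcup_{K\lneq K'\leq G}\tilde{V}^{K'},
\]
which is subanalytic as the difference of a real analytic set and a finite union of real analytic sets. Taking the union over the finitely many subgroups $K\leq G$ with $K\cong H$ gives
\[
\varphi^{-1}(X_{(H)}\cap V) \;=\; \bigcup_{K\leq G,\, K\cong H}\bigl(\tilde{V}^K\setminus\textstyle\bigcup_{K\lneq K'}\tilde{V}^{K'}\bigr),
\]
again subanalytic. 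This is the required local description, so $X_{(H)}$ is subanalytic in $X$.

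There is really no serious obstacle: the argument is essentially the standard orbit-type stratification for finite group actions, combined with the fact that taking preimages under a chart map corresponds exactly to reading off isotropy in the chart. The only place where one should be slightly careful is remembering that the local group is an isomorphism class, so one must pass from ``isotropy equal to $K$'' to ``isotropy isomorphic to $H$'' via a finite union over the subgroups of the chart group $G$ of isomorphism type $H$.
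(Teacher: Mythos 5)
Your proof is correct and follows the same basic strategy as the paper: reduce to an orbifold chart, identify $\varphi^{-1}(X_{(H)}\cap V)$ with the set of points of $\tilde{V}$ whose isotropy group is isomorphic to $H$, and write that set as a finite union of orbit-type sets. The only difference is that the paper simply cites Lemma 3.2 of Matumoto--Shiota for the fact that, for a finite group acting real analytically on a manifold, the orbit-type sets are subanalytic, whereas you prove this ingredient directly: each fixed-point set $\tilde{V}^K$ is a real analytic (hence semianalytic, hence subanalytic) subset, and the set of points with isotropy exactly $K$ is the difference $\tilde{V}^K\setminus\bigcup_{K\lneq K'}\tilde{V}^{K'}$. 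Your version is therefore more self-contained but not essentially different; both arguments hinge on the same local identification of the local group with the chart isotropy group and on closure of subanalytic sets under finite unions and differences.
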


\begin{proof} This follows from the fact that if $G$ is a finite group acting
real analytically on a real analytic manifold $M$, then the sets
$\{ x\in M\mid G_x=gHg^{-1},\,\, {\rm for\,\, some\,\, } g\in G\}$
are subanalytic subsets of $M$, see e.g. Lemma 3.2 in \cite{MS2}.
Then $\{ x\in M\mid G_x\cong H\}$ is subanalytic as a finite union
of such subanalytic sets.
\end{proof}

The following lemmas follow immediately from the corresponding
results for subanalytic maps from a real analytic manifold to a
euclidean space, see e.g. Lemmas 4.25 and 4.28 in \cite{Ka}.

\begin{lemma}
\label{orbtulo}
Let $X$ be a real analytic orbifold  and let $h\colon X\to {\mathbb{R}}$
and $f\colon X\to {\mathbb{R}}^n$, where $n\in {\mathbb{N}}$, be subanalytic
maps. Then the product $hf\colon X\to {\mathbb{R}}^n$ is a subanalytic map.
If $h(x)\not= 0$ for every $x\in X$, then the quotient $f/h\colon X\to {\mathbb{R}}^n$
is subanalytic.
\end{lemma}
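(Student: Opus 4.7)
My plan is to reduce the orbifold statement to the known manifold version by finding, around each point, a single orbifold chart on which both $h$ and $f$ can be simultaneously lifted.

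Fix $x\in X$. By Definition \ref{submap} applied to $h$, there is an orbifold chart $(\tilde U_h, G_h, \varphi_h)$ with $x\in\varphi_h(\tilde U_h)$ and a $G_h$-invariant subanalytic lift $\tilde h\colon\tilde U_h\to\mathbb R$ (the target $\mathbb R$ being a trivial orbifold, the homomorphism $\theta$ lands in the trivial group, so ``equivariance'' is just invariance). Similarly for $f$ there is a chart $(\tilde U_f, G_f, \varphi_f)$ with a $G_f$-invariant subanalytic lift $\tilde f\colon\tilde U_f\to\mathbb R^n$. By the atlas compatibility axiom (Definition \ref{ensin}(3)(b)), I can choose a third chart $(\tilde W, K, \chi)$ around $x$, contained in $\varphi_h(\tilde U_h)\cap\varphi_f(\tilde U_f)$, together with embeddings $(\lambda_h, \theta_h)\colon(\tilde W,K,\chi)\to(\tilde U_h,G_h,\varphi_h)$ and $(\lambda_f,\theta_f)\colon(\tilde W,K,\chi)\to(\tilde U_f,G_f,\varphi_f)$.

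Next I pull back the two lifts. Set $\tilde h_W = \tilde h\circ\lambda_h\colon\tilde W\to\mathbb R$ and $\tilde f_W = \tilde f\circ\lambda_f\colon\tilde W\to\mathbb R^n$. Since $\lambda_h$ and $\lambda_f$ are real analytic embeddings between real analytic manifolds, the compositions are subanalytic. They are $K$-invariant: for any $k\in K$ and $\tilde w\in\tilde W$,
\[
\tilde h_W(k\tilde w)=\tilde h(\lambda_h(k\tilde w))=\tilde h(\theta_h(k)\lambda_h(\tilde w))=\tilde h(\lambda_h(\tilde w))=\tilde h_W(\tilde w),
\]
using $G_h$-invariance of $\tilde h$, and similarly for $\tilde f_W$. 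Moreover the maps $\tilde h_W$ and $\tilde f_W$ descend through $\tilde W/K\cong\chi(\tilde W)$ to $h|_{\chi(\tilde W)}$ and $f|_{\chi(\tilde W)}$, by construction of the embeddings and the definition of subanalytic orbifold maps.

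Now I apply the manifold versions, Lemmas 4.25 and 4.28 in \cite{Ka}: the product $\tilde h_W\tilde f_W\colon\tilde W\to\mathbb R^n$ is a subanalytic map between real analytic manifolds, and since it is $K$-invariant it descends to the restriction of $hf$ on $\chi(\tilde W)$, realizing the lift required by Definition \ref{submap} at $x$. In the quotient case, the hypothesis $h\neq 0$ forces $\tilde h_W\neq 0$ on $\tilde W$ (otherwise its image would contain $0$), so Lemma 4.28 of \cite{Ka} applies to give the subanalytic $K$-invariant lift $\tilde f_W/\tilde h_W$ of $f/h$. Since $x$ was arbitrary, $hf$ and $f/h$ are subanalytic orbifold maps. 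The only step that needs care is the bookkeeping to obtain a single chart serving both $h$ and $f$, but this is immediate from the compatibility axiom; no genuine obstacle arises.
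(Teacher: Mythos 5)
Your argument is correct and is exactly the reduction the paper has in mind: the paper's proof consists of the single remark that the lemma "follows immediately" from the manifold-case Lemmas 4.25 and 4.28 of \cite{Ka}, and your chart-refinement and descent argument is the standard way to make that reduction explicit. The only point worth a word more is why $\tilde h\circ\lambda_h$ is subanalytic (its graph is the preimage of the subanalytic graph of $\tilde h$ under the real analytic map $\lambda_h\times\mathrm{id}$), but this is routine and does not affect the proof.
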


Let $\psi\colon X\to {\mathbb{R}}$ be a map.  By the  {\it support of $\psi$}, denoted by
${\rm supp}(\psi)$,  we mean the
closure of the set $\{ x\in X\mid \psi(x)\not= 0\}$.

\begin{lemma}
\label{orbsumma}
Let $X$ be a real analytic orbifold and let $\psi_i\colon X\to {\mathbb{R}}$,
$i\in {\mathbb{N}}$, be subanalytic maps such that $\{ {\rm supp}(\psi_i)\}_{
i\in {\mathbb{N}}}$ is locally finite. Then the map
$$
\psi\colon X\to {\mathbb{R}},\,\,\, x\mapsto \sum_{i=1}^\infty \psi_i(x),
$$
is subanalytic.
\end{lemma}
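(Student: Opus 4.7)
The plan is to exploit the local nature of subanalyticity to reduce the countable sum to a finite sum inside a single orbifold chart. Fix a point $x \in X$. Since $\{{\rm supp}(\psi_i)\}_{i\in\mathbb{N}}$ is locally finite, there is an open neighborhood $W$ of $x$ meeting only finitely many of the sets ${\rm supp}(\psi_i)$, say those with indices in a finite set $I_x\subset \mathbb{N}$. On $W$, the series defining $\psi$ collapses to the finite sum $\sum_{i\in I_x}\psi_i$, so it suffices to show subanalyticity of this finite sum at $x$.

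For each $i\in I_x$, subanalyticity of $\psi_i$ gives an orbifold chart $(\tilde U_i, G_i, \varphi_i)$ at $x$, a homomorphism $\theta_i\colon G_i \to\{e\}$ (the target $\mathbb{R}$ carries the trivial orbifold structure), and a $G_i$-invariant subanalytic lift $\tilde\psi_i\colon \tilde U_i \to \mathbb{R}$. To add these lifts I need them on a common domain, and this is where I use the orbifold atlas compatibility axiom (Definition \ref{ensin}(3)(b)): applied inductively to the finitely many charts $(\tilde U_i, G_i,\varphi_i)$, it produces an orbifold chart $(\tilde V, G, \varphi)$ at $x$ with $V\subset W$, together with embeddings $(\lambda_i, h_i)\colon (\tilde V, G, \varphi)\to (\tilde U_i, G_i, \varphi_i)$ for every $i\in I_x$.

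Now I pull back. Each composition $\tilde\psi_i \circ \lambda_i\colon \tilde V\to \mathbb{R}$ is subanalytic, since $\lambda_i$ is a real analytic embedding between real analytic manifolds and preimages of subanalytic sets under such maps are subanalytic; moreover $\tilde\psi_i\circ\lambda_i$ is $G$-invariant because $\tilde\psi_i$ is $G_i$-invariant and $\lambda_i$ is $h_i$-equivariant. Applying the standard fact that a finite sum of subanalytic functions on a real analytic manifold is subanalytic (the same source as Lemmas \ref{orbtulo} and the remark preceding it, i.e.\ Lemmas 4.25 and 4.28 in \cite{Ka}), the function $\tilde\psi:=\sum_{i\in I_x}\tilde\psi_i\circ\lambda_i$ is a $G$-invariant subanalytic map on $\tilde V$. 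By construction $\varphi^{-1}(\psi^{-1}(B)\cap V) = \tilde\psi^{-1}(B)$ for every Borel set $B\subset\mathbb{R}$, so $\tilde\psi$ serves as a subanalytic lift of $\psi$ on $\tilde V$. Since $x\in X$ was arbitrary, $\psi$ satisfies Definition \ref{submap} and is subanalytic.

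The only delicate step is the bookkeeping needed to land every $\tilde\psi_i$ on a single chart before summing: subanalyticity gives charts depending on $i$, and the compatibility axiom of the orbifold atlas has to be invoked (a finite number of times) to refine them simultaneously. Everything after that is standard subanalytic calculus on real analytic manifolds, applied to a finite collection of $G$-invariant functions.
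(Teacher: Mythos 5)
Your proof is correct and follows essentially the same route as the paper, whose entire proof is the one-line remark that the lemma ``follows immediately'' from the corresponding manifold statement (Lemmas 4.25 and 4.28 in \cite{Ka}). What you have written is exactly the chart bookkeeping that remark suppresses: localizing via local finiteness to reduce to a finite sum, passing to a common chart via the atlas compatibility axiom, pulling back the $G_i$-invariant lifts, and invoking the manifold-level fact that a finite sum of subanalytic functions is subanalytic.
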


\section{Subanalytic partitions of unity for orbifolds}
\label{maps}

\noindent In this section we prove the existence of subanalytic
partitions of unity in the orbifold case.

\begin{lemma}
\label{forpartunitex}
Let $(\tilde{V},G,\varphi)$ be a chart of a real analytic orbifold $X$. 
Let $A$ and $B$ be disjoint closed subsets of $V=\varphi({\tilde{V}})$. Then there
is a subanalytic map $f\colon V\to {\mathbb{R}}$ such that $f\vert A=0$,
$f\vert B=1$ and $f(V)\subset [0,1]$.
\end{lemma}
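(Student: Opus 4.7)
The plan is to reduce the problem to the analogous Urysohn-type result on the real analytic manifold $\tilde{V}$, and then average to produce a $G$-invariant function that descends to $V$.

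First I would pull back the data: set $\tilde{A}=\varphi^{-1}(A)$ and $\tilde{B}=\varphi^{-1}(B)$. Since $\varphi$ is continuous and induces a homeomorphism $\tilde{V}/G\to V$, the sets $\tilde{A},\tilde{B}$ are closed, disjoint, and $G$-invariant in $\tilde{V}$. Next I would invoke the analogous subanalytic Urysohn statement for the real analytic manifold $\tilde{V}$ (the manifold-level counterpart, of the type referenced in \cite{Ka}): there exists a subanalytic map $\tilde{g}\colon\tilde{V}\to[0,1]$ with $\tilde{g}\vert \tilde{A}=0$ and $\tilde{g}\vert \tilde{B}=1$.

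To make this function descend, I would average over $G$: define
$$\tilde{f}(\tilde{x})=\frac{1}{\vert G\vert}\sum_{g\in G}\tilde{g}(g\tilde{x}).$$
Each summand is a composition of $\tilde{g}$ with the real analytic action of $g$, hence subanalytic, so $\tilde{f}$ is subanalytic as a finite sum (Lemmas \ref{orbtulo} and \ref{orbsumma}, applied in the manifold setting, or directly from the manifold facts in \cite{Ka}). Because $\tilde{A}$ and $\tilde{B}$ are $G$-invariant, $\tilde{f}$ still satisfies $\tilde{f}\vert\tilde{A}=0$ and $\tilde{f}\vert\tilde{B}=1$, and its image lies in $[0,1]$. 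Since $\tilde{f}$ is $G$-invariant, it factors through $\tilde{V}/G\cong V$ to yield a continuous map $f\colon V\to[0,1]$ with $f\circ\varphi=\tilde{f}$, and this $f$ clearly satisfies $f\vert A=0$, $f\vert B=1$.

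Finally, $f$ is subanalytic in the sense of Definition \ref{submap}: the identity chart on the target ${\mathbb{R}}$ (with trivial group) together with $(\tilde{V},G,\varphi)$ on the source, linked by the $G$-invariant subanalytic lift $\tilde{f}$, makes the required diagram commute. The main point of substance is the existence of the subanalytic Urysohn function $\tilde{g}$ at the manifold level for arbitrary (not necessarily subanalytic) closed disjoint sets $\tilde{A},\tilde{B}$; once that is granted from the cited results in \cite{Ka}, the orbifold case follows almost mechanically by the averaging and descent steps outlined above.
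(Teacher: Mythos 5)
Your proof is correct and follows essentially the same route as the paper: both reduce to a $G$-invariant subanalytic Urysohn function on $\tilde{V}$ and then descend through $\varphi$ to $V$. The only difference is that the paper cites the $G$-invariant separating function directly (Proposition 5.4 in \cite{Ka}), whereas you obtain it by averaging a non-equivariant one over the finite group $G$, which is a standard and valid way to supply the same ingredient.
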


\begin{proof} It is well-known (see e.g. Proposition 5.4 in \cite{Ka}) that
there is a $G$-invariant subanalytic map $\tilde{f}\colon \tilde{V}\to
{\mathbb{R}}$ such that $\tilde{f}\vert \varphi^{-1}(A)=0$,
$\tilde{f}\vert \varphi^{-1}(B)=1$ and $\tilde{f}(\tilde{V})\subset [0,1]$.
This map induces a subanalytic map $f\colon V\to{\mathbb{R}}$ such that
$f\circ\varphi=\tilde{f}$. Clearly, $f$ has the desired properties.
\end{proof}

\begin{definition}
\label{partunit}
Let $X$ be a real analytic orbifold. A {\it  subanalytic 
partition of unity} is a collection $\{ \lambda_i\}$ of subanalytic 
maps $X\to {\mathbb{R}}$ with the following properties:
\begin{enumerate}
\item $\lambda_i(x)\geq 0$ for every $x\in X$,
\item $\{ {\rm supp}(\lambda_i)\}$ is a locally finite cover of $X$, and
\item $\sum_i \lambda_i(x)=1$ for every $x\in X$.
\end{enumerate}
\end{definition}

A partition $\{ \lambda_i\}$ of unity is said to be {\it subordinate} to an open cover
$\{ U_j\}$ of $X$, if for every $\lambda_i$ there is a $U_j$ such that ${\rm supp}(\lambda_i)
\subset U_j$.

\begin{theorem}
\label{partunitex}
Let $X$ be a real analytic orbifold  and let ${\mathcal U}$ be an open cover of $X$.
Then $X$ has a subanalytic  partition of unity
subordinate to ${\mathcal U}$.
\end{theorem}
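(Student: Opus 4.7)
The plan is to adapt the classical Urysohn-and-normalize construction: build subanalytic bump functions in charts using Lemma \ref{forpartunitex}, check that they are globally subanalytic on $X$ after extension by zero, then add and normalize via Lemmas \ref{orbsumma} and \ref{orbtulo}.

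First I would use paracompactness, second countability, and local compactness of $X$ (every point lies in a chart $\mathbb{R}^n/G$) to refine $\mathcal U$ to a countable locally finite open cover $\{V_k\}_{k\in\mathbb N}$ by basic open sets with $\overline{V_k}$ compact and contained in the domain $V'_k$ of some real analytic orbifold chart, and with each $V_k\subset U_{\alpha(k)}\in\mathcal U$. Since $X$ is paracompact Hausdorff, hence normal, the shrinking lemma for locally finite covers produces an open cover $\{W_k\}$ of $X$ with $\overline{W_k}\subset V_k$.

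Next, for each $k$ the sets $\overline{W_k}$ and $V'_k\setminus V_k$ are disjoint closed subsets of $V'_k$, so Lemma \ref{forpartunitex} yields a subanalytic map $g_k\colon V'_k\to[0,1]$ that equals $1$ on $\overline{W_k}$ and $0$ on $V'_k\setminus V_k$. Extend $g_k$ by zero to $\bar g_k\colon X\to\mathbb R$. Granted that $\bar g_k$ is subanalytic on all of $X$ (the delicate point, addressed below), put $S=\sum_k\bar g_k$; the supports $\operatorname{supp}(\bar g_k)\subset\overline{V_k}\subset V'_k$ form a locally finite family because $\{V_k\}$ does, so Lemma \ref{orbsumma} makes $S$ subanalytic. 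Since $\{W_k\}$ covers $X$ and $\bar g_k\equiv 1$ on $W_k$, we have $S\geq 1$ everywhere, so Lemma \ref{orbtulo} gives a subanalytic partition of unity $\lambda_k=\bar g_k/S$, subordinate to $\mathcal U$ because $\operatorname{supp}(\lambda_k)\subset V_k\subset U_{\alpha(k)}$.

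The step I expect to be the main obstacle is checking that the extension by zero $\bar g_k$ is subanalytic as an orbifold map in the sense of Definition \ref{submap}: that definition demands an equivariant subanalytic lift in \emph{some} chart around \emph{every} point of $X$, not just around points of $V'_k$. The condition $\overline{V_k}\subset V'_k$ built into the refinement is what makes this work. If $x\in V'_k$, the ambient chart on $V'_k$ supplies the $G$-invariant lift of $g_k$ produced inside Lemma \ref{forpartunitex}. If $x\notin V'_k$, then $x\notin\overline{V_k}\supset\operatorname{supp}(\bar g_k)$, so $\bar g_k$ vanishes identically on an open neighborhood of $x$, and the zero map serves as an equivariant subanalytic lift in any chart around $x$. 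These two cases exhaust $X$ and close the verification.
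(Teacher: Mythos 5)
Your proof is correct and follows essentially the same route as the paper's: subanalytic bump functions from Lemma \ref{forpartunitex} on a shrunken locally finite refinement by basic open sets, summed via Lemma \ref{orbsumma} and normalized via Lemma \ref{orbtulo}; your explicit two-case check that the extension by zero is subanalytic as an orbifold map is a detail the paper leaves implicit. One tiny slip at the end: ${\rm supp}(\lambda_k)$ is the \emph{closure} of $\{ \bar g_k\not=0\}$, hence contained in $\overline{V_k}$ rather than in $V_k$, so you should arrange $\overline{V_k}\subset U_{\alpha(k)}$ in the refinement (the paper handles this with its three nested covers $\overline{V_j}\subset W_j$, $\overline{W_j}\subset U_j$).
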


\begin{proof} Since $X$ is paracompact, ${\mathcal U}$ has a locally finite refinement by
basic open sets. Thus we may assume that ${\mathcal U}$ consists of basic open
sets and it suffices to find a partition of unity subordinate to such ${\mathcal U}$.  
Since $X$ is paracompact, ${\mathcal U}=\{ U_j\}$ has locally finite refinements
$\{ W_j\}$ and $\{ V_j\}$ by open sets $W_j$ and $V_j$, respectively, such that
$\overline{V_j}\subset W_j$ and $\overline{W}_j\subset U_j$ for every $j$.
According to Lemma \ref{forpartunitex}, there exists for every $j$ a subanalytic
map $f_j\colon X\to{\mathbb{R}}$ such that $f_j(X)\subset [0,1]$, $f_j$ is
identically one on $\overline{V_j}$ and ${\rm supp}(f_j)\subset {W}_j$. Since
$\{ V_j\}$ is a cover of $X$, it follows that the maps
$$
\lambda_i\colon X\to {\mathbb{R}},\,\,\, x\mapsto
{ {f_i(x)}\over { \Sigma_j f_j(x)} },
$$
are well defined. The conditions $(1)$, $(2)$ and $(3)$ of Definition
\ref{partunit} clearly hold. The maps $\lambda_i$ are subanalytic by Lemmas
\ref{orbsumma} and \ref{orbtulo}.
\end{proof}

Notice that  the maps $\lambda_i$
in Theorem \ref{partunitex} are constructed in such a way that
${\rm supp}(\lambda_i)\subset U_i$, for every $i$.

\section{Triangulation theorem for real analytic orbifolds}
\label{triangle}
\noindent We are now ready to prove the triangulation theorem for
real analytic orbifolds. The corresponding result for orbit spaces is
proved in Section 7 of \cite{Ka}.   We construct the triangulation by 
adapting the ideas in \cite{Ka} to the orbifold case.
To get started, we need a local result, which follows
from a theorem of Matumoto and Shiota:

\begin{theorem}
\label{matushi}
Let $G$ be a compact Lie group and let $M$ be a real analytic $G$-manifold.
Let $\pi\colon M\to M/G$ be the natural projection. Then there
exists a real analytic proper $G$-invariant map $f\colon M\to {\mathbb{R}}^n$,
where $n=2{\rm dim}(M)+1$, such that the induced map $\bar{f}\colon M/G\to f(M)$ is a homeomorphism. Moreover, if another subanalytic set structure on $M/G$ is given
by an inclusion $j\colon M/G\to {\mathbb{R}}^p$, $p\in{\mathbb{N}}$, such that
$j\circ \pi\colon M\to {\mathbb{R}}^p$ 
is a proper subanalytic map, then $j(M/G)$ and $f(M)$ are subanalytically homeomorphic.
\end{theorem}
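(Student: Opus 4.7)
The plan is to first embed $M$ equivariantly and analytically into an orthogonal $G$-representation, use Hilbert's finiteness theorem to produce a proper real analytic $G$-invariant map into some $\mathbb{R}^k$ separating orbits, and then apply a generic linear projection to cut the target dimension down to $n=2\dim(M)+1$. Concretely, I would first invoke the analytic Mostow--Palais theorem for proper actions of compact Lie groups to embed $M$ properly and $G$-equivariantly into some finite-dimensional orthogonal $G$-representation $V$. By Hilbert's theorem the ring $\mathbb{R}[V]^G$ of $G$-invariant polynomials is finitely generated by some $p_1,\dots,p_k$, so the map $P=(p_1,\dots,p_k)\colon V\to\mathbb{R}^k$ is real analytic, $G$-invariant, proper, and separates $G$-orbits; restricting to $M$ gives a proper real analytic $G$-invariant map whose induced map $M/G\to P(M)$ is a homeomorphism onto a closed subanalytic set of dimension $\le\dim M$.

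Next, to bring the target down to $n=2\dim(M)+1$, I would apply a Whitney-type generic linear projection argument to the subanalytic set $P(M)\subset\mathbb{R}^k$. In the subanalytic category, a standard transversality argument as developed by Shiota produces a linear surjection $L\colon\mathbb{R}^k\to\mathbb{R}^n$ whose restriction to $P(M)$ is injective and proper; if necessary, an auxiliary norm-like invariant can be adjoined to the $p_i$ to guarantee properness of $L|_{P(M)}$ for generic $L$. The composite $f=L\circ P$ is then a real analytic proper $G$-invariant map making $\bar f\colon M/G\to f(M)$ a homeomorphism.

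For the moreover part, given $j\colon M/G\to\mathbb{R}^p$ with $j\circ\pi$ proper subanalytic, the product $(f,\,j\circ\pi)\colon M\to\mathbb{R}^n\times\mathbb{R}^p$ is proper and subanalytic, so its image is a subanalytic subset of $\mathbb{R}^n\times\mathbb{R}^p$. This image is precisely the graph of $j\circ\bar f^{-1}\colon f(M)\to j(M/G)$; since a continuous map with subanalytic graph is subanalytic, both $j\circ\bar f^{-1}$ and its inverse (by symmetry) are subanalytic homeomorphisms. The main obstacle is the dimension reduction step, where one must simultaneously achieve injectivity and properness on the subanalytic set $P(M)$ via a linear projection so as to preserve real analyticity of the composite; this is where the subanalytic set theory of Shiota becomes essential, since the usual generic-projection machinery is formulated for smooth maps and needs careful adaptation to maintain the analytic and proper structure at once.
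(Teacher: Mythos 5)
The paper does not actually prove this statement: Theorem \ref{matushi} is imported verbatim from Matumoto and Shiota, and the ``proof'' in the text is the single citation \cite{MS2}, Theorem 3.1. Your overall strategy --- equivariant analytic embedding into an orthogonal representation, Hilbert generators of the invariant ring to separate orbits, a generic linear projection to reach $n=2\dim(M)+1$, and a graph argument for the uniqueness clause --- is the classical route to results of this kind, and your treatment of the ``moreover'' part is essentially the same graph-of-the-transition-map argument that the author herself uses later in the proof of Theorem \ref{firstthm}, so that part is sound.

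Two steps, however, have genuine gaps. First, the equivariant Mostow--Palais embedding of $M$ into a finite-dimensional orthogonal $G$-representation requires $M$ to have finitely many orbit types, whereas the theorem allows an arbitrary, possibly non-compact, real analytic $G$-manifold; a compact Lie group can act with infinitely many orbit types on a non-compact manifold (take $G=S^1$ acting on a countable disjoint union of circles, acting on the $n$-th one with kernel ${\mathbb{Z}}/n$), so your very first step is unavailable in the stated generality and must be replaced by a localization and patching argument. Second, properness of a generic linear projection $L$ restricted to the closed but unbounded subanalytic set $P(M)\subset{\mathbb{R}}^k$ is \emph{not} a generic property and can fail for every $L$; your proposed repair of adjoining a norm-like invariant is the right instinct but is not carried out: reserving one coordinate for a proper invariant reduces the number of coordinates available for the projection, and you must then prove injectivity of the combined map rather than of $L\circ P$ alone while still landing in ${\mathbb{R}}^{2\dim(M)+1}$. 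Relatedly, the set of secant directions of a non-compact subanalytic set is the image of a non-proper map and need not be subanalytic, so the genericity argument has to be run as a Sard/measure-zero count over a locally finite stratification rather than by bounding the dimension of a subanalytic image. These are precisely the points where the real work lies and where the sketch currently elides it.
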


\begin{proof}
\cite{MS2}, Theorem 3.1.
\end{proof} 

Recall that if $M$ and $N$ are real
analytic manifolds, if  $f\colon M\to N$ is a proper real analytic map
and if $A$ is a subanalytic subset of $M$, then the image $f(A)$ is a
subanalytic set in $N$ (Proposition 3.8 in \cite{Hi1}). Thus the map $f$ 
in Theorem \ref{matushi} really induces a subanalytic structure for $M/G$.

By a {\it subanalytic homeomorphism} we mean a  subanalytic
bijection whose inverse map is subanalytic. Assume $f\colon A\to B$ is a homeomorphism
between subanalytic subsets  $A$ and $B$ of two real analytic manifolds. 
If $f$ is subanalytic, then the inverse
map of $f$ is automatically subanalytic  and we call $A$ and $B$ {\it subanalytically
homeomorphic.} The orbifold case differs form the manifold case, since a
map between two orbifolds that is both subanalytic and a  homeomorphism
does not need to be a subanalytic homeomorphism. The reason is that the inverse
map does not need to be an orbifold map, i.e., it does not necessarily have the
local lifts as in Definition \ref{submap}.

\begin{lemma}
\label{homeom}
Let $X$ be a real analytic orbifold. Then there exists a subanalytic map 
$f_0\colon X\to{\mathbb{R}}^q$, for some $q\in{\mathbb{N}}$, such that
$f_0$ is a homeomorphism onto $f_0(X)$.
\end{lemma}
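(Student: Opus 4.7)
The plan is to patch together the local subanalytic embeddings produced by Theorem~\ref{matushi} via a subanalytic partition of unity, while Milnor's refinement theorem (Theorem~\ref{palais}) keeps the number of coordinates finite, and an auxiliary family of chart-labels preserves injectivity of the sum. First, fix a locally finite cover of $X$ by basic open sets $\{W_\alpha\}_{\alpha\in A}$ with a shrinking $\{W''_\alpha\}$ satisfying $\overline{W''_\alpha}\subset W_\alpha$. For each $\alpha$, apply Theorem~\ref{matushi} to the chart $(\tilde{W}_\alpha,G_\alpha,\varphi_\alpha)$ (here $G_\alpha$ is finite, hence compact) to obtain a proper real analytic $G_\alpha$-invariant map $\tilde{\phi}_\alpha\colon \tilde{W}_\alpha\to{\mathbb{R}}^{2n+1}$ descending to a subanalytic homeomorphism $\phi_\alpha\colon W_\alpha\to \tilde{\phi}_\alpha(\tilde{W}_\alpha)$.

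Next, apply Theorem~\ref{palais} to $\{W''_\alpha\}$ to obtain an open cover $\{O_{i\beta}\mid i=0,\ldots,n,\ \beta\in B_i\}$ with $O_{i\beta}\cap O_{i\beta'}=\emptyset$ for $\beta\neq\beta'$, and an index map $\sigma$ satisfying $O_{i\beta}\subset W''_{\sigma(i,\beta)}$. Take a subanalytic partition of unity $\{\lambda_{i\beta}\}$ with ${\rm supp}(\lambda_{i\beta})\subset O_{i\beta}$ from Theorem~\ref{partunitex}, and fix an injection $\alpha\mapsto n_\alpha$ from $A$ into the positive integers. For each $i\in\{0,\ldots,n\}$ set
\[
F_i(x)=\sum_{\beta\in B_i}\lambda_{i\beta}(x)\phi_{\sigma(i,\beta)}(x),\quad \Lambda_i(x)=\sum_{\beta\in B_i}\lambda_{i\beta}(x),\quad \mu_i(x)=\sum_{\beta\in B_i}n_{\sigma(i,\beta)}\lambda_{i\beta}(x),
\]
each summand in $F_i$ being extended by zero outside $W_{\sigma(i,\beta)}$. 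Disjointness of $\{O_{i\beta}\}_{\beta\in B_i}$ makes every sum reduce to at most one continuous nonzero term at each point, and Lemmas~\ref{orbtulo} and~\ref{orbsumma} give subanalyticity. Define $f_0=(F_0,\Lambda_0,\mu_0,\ldots,F_n,\Lambda_n,\mu_n)\colon X\to {\mathbb{R}}^{(n+1)(2n+3)}$.

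To check that $f_0$ is a homeomorphism onto its image, suppose first that $f_0(x)=f_0(y)$. Since $\sum_i\Lambda_i\equiv 1$, some $\Lambda_{i_0}(x)>0$; let $\beta_x,\beta_y\in B_{i_0}$ be the unique indices with $\lambda_{i_0\beta_x}(x)>0$ and $\lambda_{i_0\beta_y}(y)>0$. Then $\Lambda_{i_0}(x)=\Lambda_{i_0}(y)$ and $\mu_{i_0}(x)=\mu_{i_0}(y)$ force $n_{\sigma(i_0,\beta_x)}=n_{\sigma(i_0,\beta_y)}$, hence $\sigma(i_0,\beta_x)=\sigma(i_0,\beta_y)=:\alpha$, so $x,y\in W_\alpha$; and $F_{i_0}(x)=F_{i_0}(y)$ together with $\lambda_{i_0\beta_x}(x)=\lambda_{i_0\beta_y}(y)>0$ yields $\phi_\alpha(x)=\phi_\alpha(y)$, so $x=y$ by injectivity of $\phi_\alpha$. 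For inverse continuity, given $f_0(x_k)\to f_0(x)$, coordinate continuity forces $\Lambda_{i_0}(x_k)>0$ eventually; the positive-integer-valued quotient $\mu_{i_0}(x_k)/\Lambda_{i_0}(x_k)$ converges to $n_\alpha$, so it equals $n_\alpha$ for large $k$, whence $x_k\in W_\alpha$ and $\phi_\alpha(x_k)=F_{i_0}(x_k)/\Lambda_{i_0}(x_k)\to\phi_\alpha(x)$, giving $x_k\to x$ by continuity of $\phi_\alpha^{-1}$.

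The main obstacle is the collapse from the infinite-dimensional \emph{naive} embedding $x\mapsto(\lambda_j(x),\lambda_j(x)\phi_j(x))_j$ to a fixed finite-dimensional map: Milnor's disjoint refinement lumps embeddings from many different charts at each level $i$, which by itself destroys the separation of points belonging to distinct source charts. The auxiliary coordinates $\mu_i$ restore the chart-label information in a subanalytic and continuous manner, and the discreteness of the integer labels $n_\alpha$ is exactly what makes the label recoverable from the quotient $\mu_i/\Lambda_i$ both for the pointwise injectivity argument and, crucially, for the continuity of $f_0^{-1}$.
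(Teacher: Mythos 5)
Your proposal is correct and follows essentially the same route as the paper: local subanalytic embeddings from Theorem \ref{matushi}, Milnor's disjoint refinement (Theorem \ref{palais}) to reduce to finitely many levels, cutoff/partition-of-unity gluing, and a discrete integer label per level whose recoverability gives both injectivity and continuity of the inverse. The only (cosmetic) difference is that the paper records the label as a literal extra coordinate $\beta$ multiplied by a cutoff, whereas you encode it as the quotient $\mu_i/\Lambda_i$ of weighted sums.
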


\begin{proof}   Since orbifolds are second countable, it follows that
the orbifold $X$ can be covered by  basic open sets $V_i$, 
$i\in{\mathbb{N}}$. By Theorem \ref{matushi}, for every orbifold chart 
$(\tilde{V}_i, G_i, \varphi_i)$, there is a real analytic proper $G_i$-invariant map
$\tilde{f}_i\colon {\tilde{V}}_i\to {\mathbb{R}}^s$, for some $s\in{\mathbb{N}}$, such that
the induced map $\bar{f}_i\colon V_i\to\tilde{ f}_i(\tilde{V}_i)$ is a homeomorphism. We can choose
$s=2{\rm dim}\ (X)+1$. 

By Theorem \ref{palais}, $\{ V_i\}_{i=1}^\infty$ has an open locally finite
refinement $\{ O_{j\beta} \mid \beta\in B_j, j=1,\ldots, k\}$, such that  
$O_{j\beta}\cap O_{j\beta'}=\emptyset$ if $\beta\not= \beta'$.  As pointed out
after Theorem \ref{palais}, we may assume that each $B_j\subset {\mathbb{N}}$.
For every $j$ we denote $\bigcup_{\beta\in B_j}O_{j\beta}$ by $O_j$.
For every $j$ we define a map $f_j\colon O_j\to {\mathbb{R}}^{s+1}$, by setting
$f_j(y)=(\bar{ f}_{i_0}(y),\beta)$ if $y\in O_{j\beta}$ and $i_0$ is the smallest $i$ for which
$O_{j\beta}\subset V_i$. Then $f_j$ is real analytic. Clearly, $f_j$ is an injection.

Since $X$ is paracompact, we can choose open covers 
$\{ W_{j\beta} \mid \beta\in B_j, j=1,\ldots, k\}$,
$\{ U_{j\beta} \mid \beta\in B_j, j=1,\ldots, k\}$ and
$\{ Y_{j\beta} \mid \beta\in B_j, j=1,\ldots, k\}$
of $X$ by open sets
$W_{j\beta}$,  $U_{j\beta}$ and $Y_{j\beta}$,  respectively, such that 
$\overline{U}_{j\beta}\subset {W}_{j\beta}$,
$\overline{W}_{j\beta}\subset {Y}_{j\beta}$ and
$\overline{Y}_{j\beta}\subset {O}_{j\beta}$ for every $j$ and $\beta$.
We write $W_j=\bigcup_{\beta\in B_j}W_{j\beta}$,
$U_j=\bigcup_{\beta\in B_j}U_{j\beta}$ and
$Y_j=\bigcup_{\beta\in B_j}Y_{j\beta}$ for every $j$.
Let $h_j\colon X\to [0,1]$ be a subanalytic map which is identically one
on $\overline{U}_j$ and vanishes outside $W_j$. Similarly, let
$h'_j\colon X\to [0,1]$ be a subanalytic map which is identically one
on $\overline{W}_j$ and vanishes outside $Y_j$.  Clearly, the maps
$$
f_{0j}\colon X\to {\mathbb{R}}^{s+1}, \,\,\,
f_{0j}(y)= 
\left\{\begin{array}{rl}
h'_j(y)f_j(y), & \,\,{\rm if}\,\,
y\in O_j     \\
0, & \,\,{\rm if}\,\, y\in X\setminus{O_j}
\end{array}\right.
$$
are subanalytic. Let
$$
f_0\colon X\to {\mathbb{R}}^p,\,\,\, y\mapsto
( h_1(y),\ldots, h_k(y), f_{01}(y),\ldots, f_{0k}(y)),
$$
where $p=k(s+2)$. Then $f_0$ is subanalytic by Proposition 8.7 in
\cite{Ka2}.

Since the maps $\bar{f}_i$ are embeddings, it follows
that the restriction $f_{0j}\vert W_{j\beta}$ is an embedding for every $\beta$.
Let $(x_d)_{d=1}^\infty$ be a sequence in $X$ such that $f_0(x_d)\to f_0(x)$,
for some $x\in X$.
Then $x\in U_{j\beta}$ for some $j$ and $\beta$ and $h_j(x)=1$. Since
$h_j(x_d)\to h_j(x)$, it follows that $h_j(x_d)>0$ for sufficiently large $d$.
Therefore $x_d\in W_{j\beta}$ for sufficiently large $d$. Since 
$f_{0j}\vert W_{j\beta}$ is an embedding and $f_{0j}(x_d)\to f_{0j}(x)$, 
it follows that $x_d\to x$. Consequently, $f_0$ is injective and the inverse
map $f_0^{-1}\colon f_0(X)\to X$ is continuous.
\end{proof}

\begin{theorem}
\label{firstthm}
Let $X$ be a real analytic  orbifold. Then there exists a proper
subanalytic map $f\colon X\to {\mathbb{R}}^n$ such that the induced
map $X\to f(X)$ is a  homeomorphism.  Thus  $f(X)$ is  a closed  subanalytic
subset of ${\mathbb{R}}^n$.
If $g\colon X\to {\mathbb{R}}^p$ is any proper subanalytic  map that also is
a topological embedding, 
then $g\circ f^{-1}\colon f(X)\to
g(X)$ is a subanalytic homeomorphism.
\end{theorem}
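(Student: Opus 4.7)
Lemma \ref{homeom} provides a subanalytic topological embedding $f_0\colon X\to\mathbb{R}^q$; I only need to augment it by a proper coordinate. Since $X$ is locally compact (being locally smooth) and second countable, pick a countable locally finite cover $\{V_i\}_{i\in\mathbb{N}}$ of $X$ by basic open sets with compact closures, and invoke Theorem \ref{partunitex} to obtain a subanalytic partition of unity $\{\lambda_i\}$ with ${\rm supp}(\lambda_i)\subset V_i$. Set
$$
\rho(x)=\sum_{i=1}^\infty i\,\lambda_i(x),
$$
which is subanalytic by Lemmas \ref{orbtulo} and \ref{orbsumma}. If $\rho(x)\le N$ then $\lambda_i(x)>0$ for some $i\le N$ (otherwise $\rho(x)\ge (N+1)\sum_{i>N}\lambda_i(x)=N+1$), so $\rho^{-1}([0,N])\subset\bigcup_{i\le N}\overline{V_i}$ is compact. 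Thus $\rho$ is proper, and $f:=(f_0,\rho)\colon X\to\mathbb{R}^n$, with $n=q+1$, is a proper, subanalytic topological embedding.

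\textbf{Closedness and subanalyticity of $f(X)$.} As a proper continuous injection into a Hausdorff space, $f$ makes $f(X)$ closed in $\mathbb{R}^n$. To see it is subanalytic it suffices to show $f(X)\cap K$ is subanalytic for every compact ball $K\subset\mathbb{R}^n$. Properness makes $f^{-1}(K)$ compact, so we can cover it by finitely many orbifold charts $(\tilde V'_{i_j},G_{i_j},\varphi_{i_j})$ whose lifts $\overline{\tilde V'_{i_j}}$ are compact subanalytic subsets of larger charts $\tilde V_{i_j}$ (within each chart one can shrink to, e.g., a $G$-invariant open ball). Then
$$
f(X)\cap K=\bigcup_j \tilde f_{i_j}\bigl(\overline{\tilde V'_{i_j}}\cap \tilde f_{i_j}^{-1}(K)\bigr),
$$
and each summand on the right is the image of a compact subanalytic set under a (hence proper) subanalytic map, so is subanalytic by the image theorem recalled after Theorem \ref{matushi} (Proposition 3.8 of \cite{Hi1}).

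\textbf{Comparison with $g$, and the main obstacle.} For the last assertion, form $(f,g)\colon X\to\mathbb{R}^{n+p}$. This map is subanalytic coordinate-wise and proper, since properness of $f$ already forces preimages of compacts to be compact. By the argument of the preceding paragraph its image is a closed subanalytic subset of $\mathbb{R}^{n+p}$; but this image is precisely the graph of $g\circ f^{-1}\colon f(X)\to g(X)$, so $g\circ f^{-1}$ is subanalytic. Interchanging the roles of $f$ and $g$ shows $f\circ g^{-1}$ is subanalytic as well, so $g\circ f^{-1}$ is a subanalytic homeomorphism. The chief technical difficulty throughout is reconciling the local, chart-by-chart definition of subanalyticity for orbifolds with the global statement that $f(X)\subset\mathbb{R}^n$ is subanalytic: properness is what reduces matters to finitely many charts per compact, after which Hironaka's image theorem inside each chart finishes the job. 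The construction of the proper exhaustion $\rho$ and the graph-as-image observation for $g\circ f^{-1}$ are essentially formal once the subanalytic orbifold machinery of Sections \ref{maps} and Lemmas \ref{orbtulo}--\ref{orbsumma} is in place.
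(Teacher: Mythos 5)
Your proof is correct and shares the paper's overall skeleton --- start from the embedding $f_0$ of Lemma \ref{homeom}, append a proper subanalytic exhaustion built from a partition of unity, and get uniqueness by observing that $(f,g)(X)$ is the graph of $g\circ f^{-1}$ --- but the two middle steps are executed differently. For properness the paper sets $\lambda=\sum_i 2^{-i}\lambda_i$ and takes $f=(f_0,1)/\lambda$, so the exhaustion enters multiplicatively through $1/\lambda$; closedness of $f(X)$ is then established by a rather delicate sequence argument (taking $(x_j)$ with $f(x_j)\to y$ and splitting into cases according to whether $y/y_{q+1}$ lies in $f_1(X)$). Your additive exhaustion $\rho=\sum_i i\,\lambda_i$ yields properness from the one-line estimate $\rho^{-1}([0,N])\subset\bigcup_{i\le N}\overline{V_i}$, after which closedness of the image is automatic; this is arguably cleaner. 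For subanalyticity of $f(X)$ and of $(f,g)(X)$, the paper simply cites Theorem 9.3 of \cite{Ka2} (images under proper subanalytic orbifold maps are subanalytic), whereas you reprove that fact by covering $f^{-1}(K)$ with finitely many relatively compact charts and working chartwise. That argument is sound, with one caveat: the chart lifts $\tilde f_{i_j}=f\circ\varphi_{i_j}$ are subanalytic but not real analytic, so Proposition 3.8 of \cite{Hi1} (images under \emph{proper real analytic} maps), which is what the paper recalls after Theorem \ref{matushi}, is not literally the statement you need; you should instead invoke the standard fact that the image of a compact subanalytic set under a subanalytic map is subanalytic (project the restricted graph), or cite Theorem 9.3 of \cite{Ka2} as the paper does. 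This is a citation slip rather than a gap.
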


\begin{proof}
Let $\{ V_i\}_{i=1}^\infty$ be a cover of $X$ by basic open sets. We  may assume that
the closure of each $V_i$ is compact. By Theorem \ref{partunitex}, there is a subanalytic
partition of unity  $\{\lambda_i\}$ subordinate to $\{ V_i\}_{i=1}^\infty$. Let
$$
\lambda\colon X\to {\mathbb{R}},\,\,\, x\mapsto \sum_{i=1}^\infty 2^{-i}\lambda_i(x).
$$
By Lemma \ref{orbsumma}, $\lambda$ is subanalytic. Clearly, $0<\lambda(x)<1$,
for every $x\in X$. By Lemma \ref{homeom}, there is a subanalytic map
$f_0\colon X\to {\mathbb{R}}^q$, for some $q\in{\mathbb{N}}$, such that
$f_0$ is a homeomorphism onto the image $f_0(X)$. Let
$$
f_1\colon X\to {\mathbb{R}}^{q+1},\,\,\, x\mapsto (f_0(x), 1),
$$
and let
$$
f\colon X\to {\mathbb{R}}^{q+1},\,\,\, x\mapsto { {f_1(x)}\over {\lambda(x)}}.
$$
Then $f$ is subanalytic. Since $f_0$ is an embedding, it follows  that
also $f$ is an embedding.

We show that $f(X)$ is closed in ${\mathbb{R}}^{q+1}$. If the contrary is true,
there is a $y\in \overline{ f(X)}\setminus f(X)$. Let $(x_j)_{j=1}^\infty$ be a
sequence in $X$ such that $f(x_j)\to y$. Then  $({ {1}\over
{\lambda(x_j)}})_{j=1}^\infty$ converges to
the last coordinate  $y_{q+1}$ of $y$ in
${\mathbb{R}}^{q+1}$. Since $0<\lambda(x)<1$ for every $x\in X$, it follows that
$y_{q+1}>0$.  Then $f_1(x_j)\mapsto { {y}\over {y_{q+1}}}$. Assume first
${ {y}\over {y_{q+1}}}\notin f_1(X)$ and let $\{ U_m\}_{m=1}^\infty$ be a
neighborhood basis of ${ {y}\over {y_{q+1}}}$. Since the maps
$\lambda_i$ have compact supports, the sets $U'_m= U_m\setminus \bigcup_{i=1}^m
f_1({\rm supp}(\lambda_i))$ also form a neighborhood basis  of
${ {y}\over {y_{q+1}}}$. For every $m$ there exists an $x_{j_m}\in
\{ x_j\}_{j=1}^\infty$ such that $f_1(x_{j_m})\in U'_m$. We may choose 
$j_{m+1}>j_m$ for every $m$. Then $x_{j_m}\notin \bigcup_{i=1}^m
{\rm supp}(\lambda_i)$.  Thus  $\lambda(x_{j_m})\to 0$,
which is impossible. It follows that  ${ {y}\over {y_{q+1}}}\in f_1(X)$. Thus
${ {y}\over {y_{q+1}}}=f_1(x)$ for some $x\in X$. Consequently, 
$f_0(x_j)\mapsto f_0(x)$. Since $f_0$ is an embedding, it follows that
$x_j\mapsto x$. Thus $\lambda(x_j)\mapsto \lambda(x)$ and
$\lambda(x)={ {1}\over {y_{q+1}} }$. Therefore, $y={ {f_1(x)}\over {\lambda(x)}}
=f(x)$, and it follows that $f(X)$ is closed. Thus $f$ is 
a proper subanalytic map and  it follows
from Theorem 9.3 in \cite{Ka2} that $f(X)$ is a closed  subanalytic subset of 
${\mathbb{R}}^{q+1}$.

It remains  to show that the subanalytic structure on $X$
is unique. Therefore, let $g\colon X\to {\mathbb{R}}^p$
be a proper subanalytic map that also is a topological embedding, 
where $p\in{\mathbb{N}}$. Then
$g\circ f^{-1}\colon f(X)\to g(X)$ is a homeomorphism.   Clearly, the map $(f,g)\colon X\to 
{\mathbb{R}}^{q+1}\times {\mathbb{R}}^p$ is subanalytic and proper.
Theorem 9.3 in \cite{Ka2} indicates that  the graph ${\rm Gr}(g\circ f^{-1})=(f,g)(X)$ 
is a subanalytic subset of ${\mathbb{R}}^{q+1}\times {\mathbb{R}}^p$.
Thus $g\circ f^{-1}$ is a subanalytic map.
\end{proof}

Let $K$ be a simplicial complex. We denote by $\vert K\vert$ the
space of $K$. The corresponding open simplex of any simplex
$\sigma\in K$ is denoted by ${\rm int}\ \sigma$. The space of
any countable locally finite simplicial complex $K$ admits a
linear embedding as a closed subset of some euclidean space
${\mathbb{R}}^n$, see Theorem 3.2.9 in \cite{Sp}. If $e\colon
\vert K\vert\to {\mathbb{R}}^n$ is a closed  linear embedding, then the
image $e(\vert K\vert)$ is a subanalytic subset of ${\mathbb{R}}^n$.
If $e_\ast\colon \vert K\vert\to{\mathbb{R}}^m$ is another closed
linear embedding, then $e_\ast\circ e^{-1}\colon  e(\vert K\vert)\to
e_\ast(\vert K\vert)$ is a subanalytic homeomorphism. Hence the
closed linear embeddings induce a unique subanalytic structure on
$\vert K\vert$.

\begin{definition}
\label{tri}
Let $X$ be a real analytic orbifold. A {\it subanalytic triangulation} 
of $X$ is a pair of a  simplicial complex
$K$ and a homeomorphism $\tau\colon \vert K\vert\to X$ such that the
inverse map $\tau^{-1}\colon X\to \vert K\vert$ is subanalytic.
We say that the triangulation is {\it compatible} with  the  family $\{ X_i\}$ 
of subsets of $X$, if each
$X_i$ is a union of some $\tau({\rm int}\ \sigma)$, where $\sigma\in K$.
\end{definition}

\begin{theorem}
\label{hironthm}
Let $\{ X_i\}$ be a locally finite family of subanalytic subsets in
${\mathbb{R}}^n$ which are contained in a subanalytic closed set
$X$ in ${\mathbb{R}}^n$. Then there exists a locally finite simplicial complex $K$
and a subanalytic homeomorphism $\tau\colon \vert K\vert\to X$ such that
each $X_i$ is a union of some $\tau({\rm int}\ \sigma)$, where 
$\sigma\in K$.
\end{theorem}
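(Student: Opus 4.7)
The plan is to invoke the classical subanalytic triangulation theorem, due to Hironaka (see \cite{Hi1}); I sketch the strategy one would carry out if proving it from scratch. The argument proceeds by induction on $n=\dim X$, the base case being immediate. For the inductive step, I would first refine the family $\{X_i\}$ to a locally finite subanalytic stratification $\mathcal S$ of $X$ such that each stratum $S\in\mathcal S$ is a connected real analytic submanifold of ${\mathbb R}^n$, every $X_i$ is a union of strata, and the frontier $\overline S\setminus S$ of each stratum is itself a union of strata of strictly smaller dimension. Such a refinement is a standard consequence of the stratification theorems for subanalytic sets; see \cite{BM} and \cite{Hi1}.

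Next, after a generic linear change of coordinates, I would choose the projection $\pi\colon{\mathbb R}^n\to{\mathbb R}^{n-1}$ onto the first $n-1$ coordinates to be in general position with respect to $\mathcal S$, so that for each stratum $S$ the restriction $\pi|_S$ has only a suitably generic critical set, and the family consisting of $\pi(X)$, the images $\pi(S)$, and the images of the critical loci of the $\pi|_S$ is again locally finite and subanalytic in ${\mathbb R}^{n-1}$. Applying the inductive hypothesis to this family produces a locally finite simplicial complex $L$ and a subanalytic homeomorphism $\sigma\colon|L|\to \pi(X)$ for which every open simplex $\sigma({\rm int}\ \tau)$ is contained in a single piece of the refined planar family.

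Over each such open simplex, the projection $\pi$ then restricts to a trivial subanalytic fibre bundle by Hardt's subanalytic local triviality, with fibre a finite collection of subanalytic points and arcs compatible with $\mathcal S$. I would lift the triangulation of $\pi(X)$ stratum by stratum, attaching cones built from the fibre data and gluing them via the trivialisations; in codimension one, the fibre combinatorics is trivial enough that standard cone constructions suffice. The result is a locally finite simplicial complex $K$ and a subanalytic homeomorphism $\tau\colon|K|\to X$ that refines $\{X_i\}$, as required.

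The principal obstacle is establishing the subanalytic local triviality of $\pi$ uniformly over each open simplex of the triangulation of $\pi(X)$; this demands both a sufficiently generic choice of projection and the subanalytic analogue of Thom's first isotopy lemma. Since all of this machinery is already assembled in \cite{Hi1} and the statement is precisely Hironaka's triangulation theorem for closed subanalytic sets with a distinguished locally finite family, the cleanest route in the present paper is simply to invoke that reference.
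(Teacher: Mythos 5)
Your proposal is correct and takes essentially the same route as the paper, which simply cites Hironaka's triangulation theorem (the paper's proof is the one-line reference ``\cite{Hi}, Theorem on p.\ 180''); your inductive sketch via stratification, generic projection, and lifting is a fair outline of how that theorem is actually proved. One small correction: the relevant reference is Hironaka's 1975 \emph{Triangulation of algebraic sets} \cite{Hi} (or Hardt \cite{Ha}), not the 1973 \emph{Subanalytic sets} paper \cite{Hi1} that you cite.
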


\begin{proof} \cite{Hi}, Theorem on p. 180.
\end{proof}

\begin{theorem}
\label{thetheorem}
Let $X$ be a real analytic  orbifold. Then $X$ has a
subanalytic triangulation compatible with the strata of $X$.
\end{theorem}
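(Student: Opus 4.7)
The plan is to push everything into Euclidean space via the embedding provided by Theorem \ref{firstthm} and then apply Hironaka's subanalytic triangulation theorem (Theorem \ref{hironthm}) to the image together with the images of the strata.

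By Theorem \ref{firstthm} there is a proper subanalytic map $f\colon X\to {\mathbb{R}}^n$ whose corestriction $X\to f(X)$ is a homeomorphism, and $f(X)$ is a closed subanalytic subset of ${\mathbb{R}}^n$. I would first verify that $\{ f(X_{(H)})\}_{H}$ is a locally finite family of subanalytic subsets of ${\mathbb{R}}^n$ contained in $f(X)$. By Lemma \ref{substrata} each stratum $X_{(H)}$ is subanalytic in $X$; pulled back through an orbifold chart $(\tilde V_i,G_i,\varphi_i)$, it becomes a $G_i$-invariant subanalytic subset of $\tilde V_i$. Since $f$ is a proper subanalytic orbifold map, the manifold image theorem for proper subanalytic maps (Proposition 3.8 in \cite{Hi1}) applied to the $G_i$-invariant subanalytic lifts of $f$ yields that $f(X_{(H)})$ is subanalytic in ${\mathbb{R}}^n$. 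Local finiteness of $\{X_{(H)}\}$, and hence of $\{f(X_{(H)})\}$, follows from the local smoothness of $X$: in a linear chart $\tilde V_i\cong{\mathbb{R}}^n$ on which $G_i$ acts orthogonally, only the finitely many isotropy subgroups of $G_i$ can appear.

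Applying Theorem \ref{hironthm} to the closed subanalytic set $f(X)$ and the family $\{f(X_{(H)})\}$ produces a locally finite simplicial complex $K$ and a subanalytic homeomorphism $\tau'\colon |K|\to f(X)$ such that each $f(X_{(H)})$ is a union of images $\tau'({\rm int}\ \sigma)$, $\sigma\in K$. I would then set $\tau = f^{-1}\circ\tau'\colon |K|\to X$. This is a homeomorphism, and because $f$ is a bijection, each stratum $X_{(H)} = f^{-1}(f(X_{(H)}))$ is a union of images $\tau({\rm int}\ \sigma)$, giving compatibility with the strata. It remains to verify that $\tau^{-1} = (\tau')^{-1}\circ f\colon X\to |K|$ is subanalytic in the sense of Definition \ref{tri}: after postcomposing with a closed linear embedding of $|K|$ into some ${\mathbb{R}}^m$, this map is the composition of the subanalytic orbifold map $f$ with the subanalytic homeomorphism $(\tau')^{-1}$ between subanalytic subsets of Euclidean spaces, so on every chart $\tilde V_i$ one obtains a $G_i$-invariant subanalytic lift, as required.

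The main obstacle I expect is the passage from subanalyticity of $X_{(H)}$ in $X$ to subanalyticity of $f(X_{(H)})$ in ${\mathbb{R}}^n$. The image of a subanalytic set under a proper subanalytic map is subanalytic in the manifold setting, but in the orbifold setting this must be handled chart by chart, reconciling the $G_i$-invariant subanalytic lifts of $f$ with a relatively compact exhaustion in order to recover properness locally over ${\mathbb{R}}^n$. Once this is in place, the remaining steps are routine invocations of the embedding theorem and Hironaka's triangulation theorem already recalled in the paper.
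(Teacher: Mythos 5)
Your proposal is correct and follows essentially the same route as the paper: embed $X$ into ${\mathbb{R}}^n$ by the proper subanalytic homeomorphism of Theorem \ref{firstthm}, apply Hironaka's triangulation theorem to $f(X)$ and the images of the strata, and pull back. The only cosmetic difference is that the paper cites its companion results (Theorem 9.3 and Corollary 9.4 of \cite{Ka2}) for the subanalyticity of $f(X_{(H)})$ and of $\tau^{-1}\circ f$, where you argue chart by chart.
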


\begin{proof}
Let $f\colon X\to {\mathbb{R}}^n$ be as in Theorem \ref{firstthm}.
The strata $X_{(H)}$ are subanalytic in $X$, by Lemma \ref{substrata}.
Clearly, they form a locally finite family in $X$.  Since $f$ is proper and 
subanalytic, it follows that the sets $f(X_{(H)})$ are subanalytic in
${\mathbb{R}}^n$ and that they form a locally finite family.
By Theorem \ref{hironthm},
there exists a simplicial complex $K$ and a subanalytic homeomorphism
$\tau\colon \vert K\vert\to f(X)$ such that each $f(X_{(H)})$ is a union of some
$\tau({\rm int}\ \sigma)$, where $\sigma\in K$. Thus $f^{-1}\circ\tau\colon
\vert K\vert \to X$ is a homeomorphism and each $X_{(H)}$ is a union of
some $f^{-1}\circ\tau({\rm int}\ \sigma)$, where $\sigma\in K$. The inverse
map $\tau^{-1}\circ f$ of $f^{-1}\circ \tau$ is subanalytic since $\tau^{-1}$
and $f$ are subanalytic and $\tau^{-1}$ is proper, see Corollary 9.4 in
\cite{Ka2}. 
\end{proof}

The subanalytic triangulation of $X$ is unique in the sense that
if $\tau_1\colon \vert K_1\vert\to X$ and $\tau_2\colon \vert K_2\vert\to X$
are two subanalytic triangulations of $X$, then $\tau_1^{-1}\circ
\tau_2\colon \vert K_2\vert\to \vert K_1\vert$ is a subanalytic homeomorphism
by Theorem \ref{firstthm}.

\section{Compatible differential structures}
\label{comp}

\noindent By a {\it ${\rm C}^r$-differential structure} on 
an orbifold $X$ we mean a maximal
 ${\rm C}^r$-atlas $\alpha$ on $X$. A ${\rm C}^s$-differential structure 
 $\beta$ on $X$, $s>r$, is called {\it compatible} with $\alpha$, if
 $\beta \subset\alpha$. In this case, every chart on $\beta$ is a chart on
 $\alpha$. Equivalently, it means that the identity map of $X$ is a
 ${\rm C}^r$-orbifold diffeomorphism $X(\alpha)\to X(\beta)$.
 
Let $M$ be a ${\rm C}^k$-manifold, $1\leq k\leq\omega$. If a Lie
group $G$ acts on $M$ via a ${\rm C}^k$-action, we call $M$ a
${\rm C}^k$-$G$-manifold.
The following results are needed to prove Theorem \ref{comba1}:

\begin{theorem}
\label{apucomb0}
Let $G$ be a finite group and let $M$ and $N$ be
${\rm C}^k$-$G$-manifolds, $2\leq k\leq\omega$. Then any 
${\rm C}^r$-differentiable $G$-equivariant
map $M\to N$, $1\leq r<k$,  can be approximated arbitrarily well in the
strong ${\rm C}^r$-topology by a ${\rm C}^k$-differentiable $G$-equivariant map.
\end{theorem}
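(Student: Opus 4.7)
The plan is to reduce to the situation where the target is a Euclidean space carrying an orthogonal $G$-representation, then carry out an equivariant averaging and project back to $N$ via an equivariant tubular neighborhood.

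First, I would invoke an equivariant embedding theorem for finite group actions to realise $N$ as a closed $C^k$-$G$-submanifold of some $({\mathbb R}^p,\rho)$ on which $G$ acts orthogonally through a linear representation $\rho$, and equip this embedding with a $G$-equivariant $C^k$-tubular neighborhood $\pi\colon U\to N$, where $U$ is an open $G$-invariant neighborhood of $N$ in ${\mathbb R}^p$. For all regularity classes $2\leq k\leq\omega$ such equivariant embeddings and tubular neighborhoods are available for finite $G$; in the real analytic case they fit into the same circle of results used earlier (differentiable slice theorem, equivariant triangulation).

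Next, write $\iota\colon N\hookrightarrow{\mathbb R}^p$ for the embedding and regard $\iota\circ f\colon M\to{\mathbb R}^p$ as a $C^r$-map with values in ${\mathbb R}^p$. By the standard non-equivariant approximation theorem — Whitney's density theorem when $k=\infty$ and Grauert's approximation theorem when $k=\omega$ — I can approximate $\iota\circ f$ arbitrarily well in the strong $C^r$-topology by a $C^k$-map $\tilde f\colon M\to{\mathbb R}^p$, with no equivariance assumed. To restore equivariance, I apply the averaging operator
$$
f^{\ast}(x)\;=\;\frac{1}{|G|}\sum_{g\in G} g^{-1}\cdot\tilde f(g\cdot x).
$$
Since $G$ is finite and acts linearly on ${\mathbb R}^p$, $f^{\ast}$ is of class $C^k$ and $G$-equivariant. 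Because $\iota\circ f$ is itself $G$-equivariant, averaging cannot worsen the approximation in any $G$-invariant $C^r$-seminorm: we have $\|f^{\ast}-\iota\circ f\|_{C^r}\le \|\tilde f-\iota\circ f\|_{C^r}$. Choosing $\tilde f$ close enough strongly in $C^r$, one arranges that the image of $f^{\ast}$ lies inside the equivariant tubular neighborhood $U$.

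Finally, set $\hat f = \iota^{-1}\circ \pi\circ f^{\ast}\colon M\to N$. Since $\pi$ is $G$-equivariant of class $C^k$, the composition is a $G$-equivariant $C^k$-map; and because $\pi$ restricts to the identity on $\iota(N)$, continuity of $\pi$ together with the $C^r$-proximity of $f^{\ast}$ to $\iota\circ f$ ensures that $\hat f$ is strongly $C^r$-close to $f$. The main obstacle I expect is the real analytic case: one needs simultaneously an equivariant real analytic embedding of $N$ admitting an equivariant real analytic tubular neighborhood, and a Grauert-type real analytic approximation of $C^r$-maps into ${\mathbb R}^p$ that is sharp enough in the strong $C^r$-topology to drop the image into $U$. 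Once these ingredients are cited, the equivariant averaging step is automatic and the rest is a routine estimate.
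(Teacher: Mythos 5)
Your argument is correct in outline, but it is worth noting that the paper does not prove this statement at all: it is quoted as a special case of Theorem 1.2 of Matumoto and Shiota \cite{MS2}, so your proposal is a genuine (re)proof rather than a variant of the paper's argument. What you describe --- equivariant embedding into an orthogonal representation, non-equivariant Whitney/Grauert approximation in the strong ${\rm C}^r$-topology, averaging over $G$, and retraction through an equivariant tubular neighborhood --- is exactly the standard mechanism behind such density theorems, and every step goes through for finite $G$: the proper equivariant embedding is obtained from a proper (real analytic, by Grauert) embedding $e\colon N\to{\mathbb R}^m$ via $x\mapsto(e(gx))_{g\in G}$ with $G$ permuting the factors of $({\mathbb R}^m)^{G}$; the averaging estimate holds because the strong topology is generated by $G$-invariant seminorms when $G$ is finite; and the equivariant real analytic tubular neighborhood of the closed image comes from the Euclidean normal bundle, which is $G$-invariant since the action is orthogonal. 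The one point you should make explicit rather than merely flag is the last one: for noncompact $N$ the tubular neighborhood requires a positive $G$-invariant radius function and the retraction $\pi$ must be checked to be real analytic when $k=\omega$, and the approximation of $\iota\circ f$ must be taken within a continuous error function $\epsilon(x)$ dominated by the fiber radius of $U$ over $\iota f(x)$ so that $f^{\ast}$ actually lands in $U$; both are standard but are precisely where the strong (as opposed to compact-open) topology is doing work. The trade-off is clear: the citation to \cite{MS2} gives the result for all compact Lie groups at once, while your argument is more elementary and self-contained because finiteness of $G$ lets you replace Haar integration by a finite sum and avoid any smoothness issues with the averaging operator.
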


\begin{proof} A special case of Theorem 1.2 in \cite{MS2}.
\end{proof}

\begin{theorem}
\label{apucomb1}
Let $G$ be a compact Lie group and let $M$ be a ${\rm C}^r$-$G$-manifold, 
$1\leq r\leq\infty$. Then, there is a ${\rm C}^k$-$G$-manifold 
$\tilde{M}$ which is ${\rm C}^r$ $G$-equivariantly diffeomorphic to $M$, 
$r<k\leq\omega$.
\end{theorem}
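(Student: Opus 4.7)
The plan is to realise $M$ as a $G$-invariant $C^r$-submanifold of a finite-dimensional orthogonal $G$-representation, perturb this embedding to a $C^k$ one by equivariant approximation, and then transport the induced $C^k$ structure back to $M$ through an equivariant tubular retraction.

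First, I would invoke an equivariant Mostow--Palais type embedding theorem to obtain a $G$-equivariant closed $C^r$-embedding $\iota\colon M\to V$, where $V$ is a (possibly countable direct sum of) finite-dimensional orthogonal $G$-representations. Since $G$ is compact the action on $M$ is automatically proper, so such embeddings can be assembled from slice charts together with averaged cut-off functions and a locally finite invariant cover. Next, by the equivariant tubular neighborhood theorem applied inside $V$, I would produce a $G$-invariant open neighborhood $U\supset \iota(M)$ together with a $G$-equivariant $C^r$ retraction $\pi\colon U\to \iota(M)\cong M$.

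The heart of the proof is then an equivariant approximation step: $\iota$, regarded as an equivariant $C^r$ map from $M$ into the real analytic $G$-manifold $V$, should be approximated arbitrarily well in the strong $C^r$ topology by an equivariant $C^k$ map $f\colon M\to V$. Once $f$ is close enough to $\iota$, its image lies in $U$, the composition $\pi\circ f\colon M\to M$ is $C^r$-close to $\mathrm{id}_M$ and hence an equivariant $C^r$-diffeomorphism of $M$, and $\tilde M:=f(M)$ is a $G$-invariant $C^k$-submanifold of $V$, so it inherits the structure of a $C^k$-$G$-manifold. Composing $f$ with $(\pi\circ f)^{-1}$ then yields the desired equivariant $C^r$-diffeomorphism $M\to\tilde M$.

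The main obstacle is making this approximation step rigorous. Theorem~\ref{apucomb0} as stated requires the source to already carry a $C^k$-$G$-structure and is restricted to finite $G$, so it does not apply directly here. What is actually needed is the full Matumoto--Shiota smoothing theorem (Theorem~1.2 of \cite{MS2}) in its compact Lie group form, where the source is only required to be $C^r$ while the target is $C^k$. Short of invoking this, one would have to argue by hand: convolve $\iota$ in charts of $V$ to obtain a (not yet equivariant) $C^k$-approximation $f_0\colon M\to V$, equivariantise by $\bar f(x)=\int_G g^{-1}f_0(g\cdot x)\,d\mu(g)$, and then recover any smoothness lost to the fact that $(g,x)\mapsto g\cdot x$ is only $C^r$ on $M$ by composing with the retraction $\pi$ and iterating the approximation -- the key point being that the averaging and convolution all take place inside the real analytic $G$-manifold $V$, so they are genuinely $C^k$ in the $V$-coordinates.
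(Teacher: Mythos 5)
The paper does not actually prove this statement: its ``proof'' is the single citation ``Theorem 1.3 in \cite{MS2}'', so you are attempting something the author deliberately outsources. Your overall strategy --- equivariant embedding into an orthogonal representation, tubular retraction, perturbation to a $C^k$ object, transport of structure --- is indeed the right one and is essentially how Matumoto and Shiota argue. But the central step of your outline has a genuine gap, which you half-acknowledge and then do not repair. The phrase ``approximate $\iota$ by an equivariant $C^k$ map $f\colon M\to V$'' is circular: $M$ carries only a $C^r$ structure, so ``$C^k$ map out of $M$'' is undefined until the theorem is proved. Your two proposed fixes do not escape this. Invoking ``the full Matumoto--Shiota smoothing theorem'' is invoking the statement to be proved. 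And the by-hand alternative fails because convolution smooths in the \emph{source} variable: ``convolving $\iota$ in charts of $V$'' makes no sense, since $V$ only provides target coordinates, and convolving in charts of $M$ can never produce more than $C^r$ regularity. Likewise the average $\bar f(x)=\int_G g^{-1}f_0(g\cdot x)\,d\mu(g)$ is only as regular in $x$ as the map $x\mapsto g\cdot x$ on $M$, i.e.\ $C^r$; the fact that the \emph{target} $V$ is real analytic is irrelevant here.

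The missing idea is that one must smooth the \emph{submanifold} $\iota(M)\subset V$, not the \emph{map} $\iota$. Concretely, the tubular retraction $\pi\colon U\to\iota(M)$ is a map defined on an open subset of the linear $G$-space $V$, where ``$C^k$'' and ``$G$-average'' are unambiguous (the action on $V$ is linear, hence analytic); one approximates $\pi$ equivariantly in the strong $C^r$ topology by a $C^k$ map and extracts from it a nearby $G$-invariant $C^k$ submanifold $\tilde M$ (Whitney's construction, via local graphs over tangent planes or a rank/fixed-point locus argument), checking that the nearest-point projection $\tilde M\to\iota(M)$ is an equivariant $C^r$-diffeomorphism. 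That extraction is where the real work of the theorem lies, and your proposal never reaches it. A secondary issue: for noncompact $M$ a compact group action may have infinitely many orbit types, so the Mostow--Palais embedding lands in an infinite direct sum of representations, and the tubular neighborhood and approximation machinery then require extra care that your sketch does not address.
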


\begin{proof} Theorem 1.3 in \cite{MS2}.
\end{proof}

Let $1\leq r\leq\infty$, and let $M$ and $N$ be ${\rm C}^r$-$G$-manifolds,
where $G$ is a finite group.  We denote the set of ${\rm C}^r$-differentiable
$G$-equivariant maps $M\to N$ equipped with the strong, i.e., the Whitney
topology, by ${\rm C}^r_{G,{\rm S}}(M,N)$. For $r=\infty$, we denote the set of
${\rm C}^\infty$-differentiable $G$-equivariant maps $M\to N$ equipped with
the Cerf topology (a topology finer than the Whitney topology) by
${\rm C}^\infty_{G,{\rm C}}(M,N)$.  Then ${\rm C}^\omega(M,N)$ is dense in
${\rm C}^\infty_{\rm C}(M,N)$ and, consequently,
${\rm C}^\omega_G(M,N)$ is dense in ${\rm C}^\infty_{G,{\rm C}}(M,N)$,
for finite $G$.

\begin{lemma}
\label{apucomb2}
Let $U$ and $V$ be open sets in ${\mathbb{R}}^n$, for some
$n\in{\mathbb{N}}$. Assume a finite group $G$ acts ${\rm C}^r$-differentiably 
both on $U$ and on $V$, $1\leq r<\infty$.
Let $W$ be an open  $G$-invariant subset of $U$. Let $f\colon
U\to V$ be a ${\rm C}^r$-differentiable $G$-equivariant map with $V'=f(W)$ open.
Then the restriction $f\vert W$ has a neighborhood ${\mathcal N}$ in ${\rm C}^r_{G,{\rm S}}
(W,V')$ such that if $g_0\in {\mathcal N}$, then  the map
$$
T(g_0)=g\colon U\to V,
$$
where
$$
g(x)= g_0(x),\,\,\, {\rm if} \,\,\, x\in W\,\,\, {\rm and}\,\,\, g(x)=
f(x)\,\,\, {\rm if}\,\,\, x\in U\setminus W,
$$
is a ${\rm C}^r$-differentiable $G$-equivariant map, and
$T\colon {\mathcal N}\to {\rm C}^r_{G,{\rm S}}(U,V)$, $g_0\mapsto T(g_0)$, is continuous.
\end{lemma}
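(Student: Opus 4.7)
The plan is to realize $\mathcal{N}$ as a single basic Whitney $C^r$-neighborhood of $f|_W$ defined by a continuous positive weight function on $W$ that decays to zero sufficiently fast at the topological boundary of $W$ inside $U$.

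First, I would recall that in the strong $C^r$-topology on $C^r_{G,S}(W,V')$, a basic neighborhood of $f|_W$ is described by any continuous function $\delta\colon W\to(0,\infty)$ as
\[
\mathcal{N}_\delta = \{\, g_0 : \|D^\alpha(g_0 - f|_W)(x)\| < \delta(x) \text{ for all } x\in W \text{ and all } |\alpha|\le r \,\}.
\]
Setting $\rho(x) = \min\{\,1,\ \mathrm{dist}(x, U\setminus W)\,\}$ and $\delta(x) = \rho(x)^{r+1}$, I would take $\mathcal{N} = \mathcal{N}_\delta$. Note that $\delta$ is continuous and positive on $W$ and tends to $0$ as $x$ approaches $\partial W\cap U$.

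Given $g_0\in\mathcal{N}$, let $h = g_0 - f|_W$ and extend $h$ by zero to $\tilde h\colon U\to\mathbb{R}^n$, so that $T(g_0) = f + \tilde h$. The image of $T(g_0)$ lies in $V$ since $g_0(W)\subset V'\subset V$ and $f(U)\subset V$, and the $G$-equivariance of $T(g_0)$ follows because $W$ and $U\setminus W$ are both $G$-invariant, $g_0$ is $G$-equivariant on $W$, and $f$ is $G$-equivariant on $U$. Thus the only substantive point is that $\tilde h\in C^r(U,\mathbb{R}^n)$. On $U\setminus\overline W$ every partial derivative of $\tilde h$ vanishes identically. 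For a boundary point $y\in\partial W\cap U = \overline W\setminus W$ and $x\in W$ close to $y$, one has $\mathrm{dist}(x, U\setminus W)\le |x-y|$, hence $\|D^\alpha h(x)\| < |x-y|^{r+1}$ for every $|\alpha|\le r$. A standard Whitney/Taylor-remainder argument then shows that all partial derivatives of $\tilde h$ up to order $r$ exist at $y$, equal zero, and are continuous there, so $\tilde h\in C^r(U,\mathbb{R}^n)$.

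For the continuity of $T\colon\mathcal{N}\to C^r_{G,S}(U,V)$, observe that for $g_0, g_0'\in\mathcal{N}$ the difference $T(g_0') - T(g_0)$ equals $g_0' - g_0$ on $W$ and vanishes on $U\setminus W$. Hence, given any basic strong neighborhood of $T(g_0)$ prescribed by a continuous $\epsilon\colon U\to(0,\infty)$, the set of $g_0'\in\mathcal{N}$ satisfying $\|D^\alpha(g_0' - g_0)(x)\| < \epsilon(x)$ for all $x\in W$ and $|\alpha|\le r$ is a neighborhood of $g_0$ in $\mathcal{N}$ whose $T$-image lies in the prescribed target neighborhood.

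The main obstacle, and really the only substantive step, is the Whitney-type verification in the third paragraph that the zero extension $\tilde h$ is genuinely $C^r$ across $\partial W\cap U$: one must confirm that the decay $\|D^\alpha h(x)\| = O(|x-y|^{r+1})$ is strong enough to force the $r$-jet of $\tilde h$ at every boundary point to vanish, thereby matching the zero jet coming from outside $W$. The exponent $r+1$ in the choice of $\delta$ is a comfortable margin; any exponent strictly greater than $r$ would suffice.
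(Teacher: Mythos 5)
Your proposal is correct, but it takes a different route from the paper in the sense that it supplies the analytic content directly: the paper's entire proof consists of the observation that $T(g_0)$ is equivariant whenever $g_0$ is (since $W$ and $U\setminus W$ are $G$-invariant) together with a citation of Lemma 2.2.8 in Hirsch's \emph{Differential Topology} for everything else. What you have written is essentially a self-contained proof of that cited lemma, adapted to the equivariant setting in the same trivial way. Your construction is sound: taking $\mathcal{N}$ to be the basic strong neighborhood determined by $\delta(x)=\min\{1,\mathrm{dist}(x,U\setminus W)\}^{r+1}$ is legitimate (for finite $r$ and open subsets of ${\mathbb R}^n$ such sets do form a neighborhood basis of $f\vert W$ in the strong topology), and the key Whitney-type step goes through: since the neighborhood bounds \emph{every} partial $D^\alpha h$ with $|\alpha|\le r$ by $\delta(x)\le |x-y|^{r+1}$, the zero extension of each $D^\alpha h$ is differentiable at every boundary point $y\in\partial W\cap U$ with vanishing derivative, and a top-down induction on $|\alpha|$ gives $\tilde h\in {\rm C}^r$. (A small remark: because you control all derivatives up to order $r$ simultaneously, any exponent strictly greater than $1$ already suffices for this argument; the exponent $r{+}1$ would be needed only if you bounded the function alone and recovered the derivatives from a Taylor remainder.) Your continuity argument for $T$ is also correct, using that $T(g_0')-T(g_0)$ is the zero extension of $g_0'-g_0$ and that the $r$-jets of both glued maps agree with that of $f$ along $\partial W\cap U$. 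What the paper's approach buys is brevity; what yours buys is independence from the reference and an explicit, checkable choice of $\mathcal{N}$.
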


\begin{proof} 
It is clear that $T(g_0)$ is equivariant when $g_0$ is equivariant.
The rest of the claims follow immediately from Lemma 2.2.8 in \cite{Hir}.
\end{proof}

According to Theorem 2.2.9 in \cite{Hir}, every ${\rm C}^r$-manifold,
$1\leq r<\infty$, has a compatible ${\rm C}^s$-differential structure, where
$r<s\leq\infty$. We follow  Hirsch's proof to prove the corresponding
result for orbifolds:

\begin{theorem}
\label{comba1} 
Let $\alpha$ be a ${\rm C}^r$-differential structure on the
orbifold $X$, $r\geq 1$. For every $s$, $r<s\leq\infty$, there exists a compatible
${\rm C}^s$-differential structure $\beta\subset\alpha$ on $X$. 
\end{theorem}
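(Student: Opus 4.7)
The plan is to adapt Hirsch's inductive smoothing argument (the proof of his Theorem~2.2.9, cited just above the statement) to the orbifold setting, using Theorems~\ref{apucomb0} and~\ref{apucomb1} and Lemma~\ref{apucomb2} as equivariant substitutes for their non-equivariant counterparts. Since $X$ is paracompact and second countable, $\alpha$ contains a countable, locally finite subatlas $\{(\tilde{V}_i, G_i, \varphi_i)\}_{i=1}^{\infty}$ with each $\overline{V}_i$ compact, and I choose open refinements $\overline{U}_i\subset W_i$, $\overline{W}_i\subset Y_i$, $\overline{Y}_i\subset V_i$ so that $\{U_i\}$ still covers $X$. For each $i$, Theorem~\ref{apucomb1} applied to the ${\rm C}^r$-$G_i$-manifold $\tilde{V}_i$ yields a ${\rm C}^s$-$G_i$-manifold $\tilde{V}_i^\prime$ and a ${\rm C}^r$-$G_i$-equivariant diffeomorphism $\eta_i\colon\tilde{V}_i\to\tilde{V}_i^\prime$; replacing $(\tilde{V}_i, G_i, \varphi_i)$ by $(\tilde{V}_i^\prime, G_i, \varphi_i\circ\eta_i^{-1})$ gives a chart ${\rm C}^r$-equivalent to the original (so still in $\alpha$) on which $G_i$ acts by ${\rm C}^s$-diffeomorphisms. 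After this preliminary replacement, only the embeddings $\lambda_{ij}$ remain to be upgraded from ${\rm C}^r$ to ${\rm C}^s$.

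I would then construct a sequence of atlases $\beta^{(0)},\beta^{(1)},\dots \subset \alpha$ so that $\beta^{(n)}$ differs from $\beta^{(n-1)}$ only in the $n$-th chart, and so that in $\beta^{(n)}$ every pair of charts among the first $n$ is ${\rm C}^s$-compatible on the preimages of the sets $U_k\cap U_l$, $k,l\leq n$. At step $n$, modify the $n$-th chart by a ${\rm C}^r$-$G_n$-equivariant self-diffeomorphism $h_n$ of $\tilde{V}_n^\prime$ that equals the identity outside $\tilde{W}_n$, replacing $\varphi_n$ by $\varphi_n\circ h_n^{-1}$. The map $h_n$ is built by a sub-induction on $i=1,\dots,n-1$: each embedding $\lambda_{in}$, viewed as a ${\rm C}^r$-$G_i$-equivariant map into $\tilde{V}_n^\prime$ (with $G_i$ acting on the target through the injection $h_{in}\colon G_i\to G_n$), is approximated in the strong ${\rm C}^r$-topology by a ${\rm C}^s$-$G_i$-equivariant map via Theorem~\ref{apucomb0}; Lemma~\ref{apucomb2}, together with extension by $G_n$-equivariance over the $G_n$-orbit of the image, then glues this approximation into a ${\rm C}^r$-$G_n$-equivariant self-diffeomorphism of $\tilde{V}_n^\prime$ supported in $\tilde{W}_n$ and making $h_n\circ\lambda_{in}$ of class ${\rm C}^s$ on the preimage of $U_i\cap U_n$. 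Because the modifications at step $n$ act only on chart $n$, all ${\rm C}^s$-compatibilities established earlier for charts $1,\dots,n-1$ are preserved. Local finiteness of $\{V_i\}$ ensures that $\beta^{(n)}$ is eventually locally constant at every point, so the limit $\beta$ is a ${\rm C}^s$-atlas contained in $\alpha$; its maximal extension is the sought ${\rm C}^s$-differential structure.

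The chief obstacle is the sub-induction at stage $n$: a single ${\rm C}^r$-$G_n$-equivariant diffeomorphism $h_n$ must simultaneously straighten all the $\lambda_{in}$, $i<n$, without undoing the straightenings already performed for smaller $i$. Following Hirsch's non-equivariant proof, this is handled by choosing each successive approximation and gluing much finer in the Whitney topology than the previous ones, so that subsequent perturbations cannot dominate earlier gains. Equivariance adds no essentially new conceptual difficulty, given Theorem~\ref{apucomb0} and the support-preserving equivariant gluing of Lemma~\ref{apucomb2}, but the careful bookkeeping of supports and approximation scales is the delicate technical heart of the proof.
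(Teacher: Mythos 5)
Your overall architecture differs from the paper's: you run an explicit countable induction over a locally finite atlas, modifying one chart at a time, whereas the paper follows Hirsch's Theorem 2.2.9 verbatim and uses Zorn's lemma on pairs $(B,\beta)$, where $B$ is an open subset of $X$ carrying a compatible ${\rm C}^s$-structure $\beta$, and then shows a maximal such $B$ must equal $X$ by adjoining a single chart. The Zorn reduction is not a cosmetic choice: it guarantees that at the crucial step one only ever has to straighten the new chart against \emph{one} already-smoothed object (the ${\rm C}^s$-orbifold $B$), so a single application of Theorem~\ref{apucomb0} followed by a single gluing via Lemma~\ref{apucomb2} suffices. Your version forces you into the sub-induction over $i=1,\dots,n-1$, and that is where the proposal has a genuine gap. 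You propose to control the interference between successive straightenings of the $\lambda_{in}$ by ``choosing each successive approximation and gluing much finer in the Whitney topology.'' This cannot work as stated: closeness in the strong ${\rm C}^r$-topology does not preserve ${\rm C}^s$-regularity, so a later perturbation of $h_n$, however ${\rm C}^r$-small, supported on a region overlapping the preimage of $U_i\cap U_n$ can destroy the ${\rm C}^s$-smoothness of $h_n\circ\lambda_{in}$ already achieved there. (This fineness trick is also not what Hirsch's proof does; his proof has no sub-induction.) The correct repair is essentially to reimport the paper's idea: since the first $n-1$ charts are already mutually ${\rm C}^s$-compatible, they induce a single ${\rm C}^s$-structure on $\bigcup_{i<n}U_i$, and chart $n$ should be straightened against that union in one step, using the compatibility of the $\lambda_{in}$ with the $\lambda_{jn}$ on triple overlaps.

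A second, independent gap is the equivariant gluing itself. Lemma~\ref{apucomb2} assumes a single finite group $G$ acting on both source and target and a $G$-invariant open set $W\subset U$; you instead need to take a $G_i$-equivariant ${\rm C}^s$-approximation of $\lambda_{in}$ (equivariant only through the injection $h_{in}\colon G_i\to G_n$), whose image in $\tilde{V}_n'$ is not $G_n$-invariant, and ``extend by $G_n$-equivariance over the $G_n$-orbit of the image.'' The translates $g\,\lambda_{in}(\tilde{V}_i)$ for $g\in G_n$ may overlap, and well-definedness of the equivariant extension on the overlaps is exactly the kind of compatibility that must be verified, not asserted. The paper avoids this entirely by arranging, via Theorem~\ref{apucomb1}, a single $G$-equivariant ${\rm C}^r$-diffeomorphism $f\colon\hat{U}\to\tilde{U}$ of whole chart domains with the same group acting on both sides, and taking $W=f^{-1}(\varphi^{-1}(U\cap B))$, which is automatically $G$-invariant. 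I would recommend restructuring your argument along those lines; as written, the ``delicate technical heart'' you identify is not merely delicate but unproved, and the proposed mechanism for handling it is incorrect.
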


\begin{proof}  Let ${\mathcal B}$ denote
the family of all pairs $(B,\beta)$, where $B$ is an open subset of
$X$ and $\beta$ is a ${\rm C}^s$-structure on $B$, compatible with
the ${\rm C}^r$-structure $B$ inherites from $X$. By Lemma 
\ref{apucomb1}, the basic open sets of $X$ have a compatible 
${\rm C}^s$-structure. Thus ${\mathcal B}$ is not empty.

Define 
$$
(B_1,\beta_1)\leq (B_2,\beta_2)
$$
if and only if:
{\begin{enumerate}
\item $B_1\subset B_2$.
\item The ${\rm C}^s$-structure $\beta_1$ on $B_1$ is the one induced from the
${\rm C}^s$-structure $\beta_2$ on $B_2$.
\end{enumerate}}
\noindent Then $\leq$ defines an order in ${\mathcal B}$. Let ${\mathcal C}$ be a chain in
${\mathcal B}$. We denote by ${\mathcal C}_1$ the family of all $B$ occurring as
the first coordinate of a pair in ${\mathcal C}$, and by ${\mathcal C}_2$ the family of
all $\beta$ occurring as the second coordinate of a pair in ${\mathcal C}$. Let
$$
B^\ast=\bigcup_{B\in {\mathcal C}_1}B,\,\,\, {\rm and}\,\,\, \beta'=\bigcup_{
\beta\in{\mathcal C}_2}\beta.
$$
Then $B^\ast$ is an open subset of $X$ and $\beta'$ is a
${\rm C}^s$-atlas on $B^\ast$, compatible with the ${\rm C}^r$-structure
on $B^\ast$. Let $\beta^\ast$ be the maximal ${\rm C}^s$-atlas on
$B^\ast$ generated by $\beta'$.  Then $(B^\ast, \beta^\ast)$ is an
upper bound for ${\mathcal C}$ in ${\mathcal B}$.  It now follows from
Zorn's lemma, that ${\mathcal B}$ has a maximal element $(B,\beta)$.
We show that $B=X$.

Assume that $B\not= X$. Then there is a chart $(\tilde{U}, G,\varphi)$
of $X$ such that $U\cap (X\setminus B)\not=\emptyset$, where
$U=\varphi(\tilde{U})$. We may assume that $U\cap B\not=\emptyset$,
since $U\cap B=\emptyset$ would contradict the maximality of $B$.
By Theorem \ref{apucomb1}, there is an open set
$\hat{U}$ of ${\mathbb{R}}^n$, where $n=\dim(X)$, on which $G$ acts
${\rm C}^s$-differentiably, and a $G$-equivariant
${\rm C}^r$-diffeomorphism $f\colon \hat{U}\to \tilde{U}$. 
Put $W'=U\cap B$,  $\tilde{W}'=\varphi^{-1}(W')$ and $W=
f^{-1}(\tilde{W}')$.

There now are two differential structures on $W'$: the ${\rm C}^r$-structure 
$\alpha$ and the compatible ${\rm C}^s$-structure $\beta
\subset \alpha$.  We shall find a $G$-equivariant ${\rm C}^r$-diffeomorphism 
$\theta\colon \hat{U}\to \tilde{U}$ such that
the restriction $\theta\vert W\colon W\to \tilde{W}'$ is a
${\rm C}^s$-diffeomorphism. Then $\theta$ induces a
${\rm C}^r$-diffeomorphism $\theta'\colon \hat{U}/G \to U$ such that
$\theta'\vert W/G\colon W/G\to W'$ is a ${\rm C}^s$-diffeomorphism. In that
case the chart $(\hat{U}, G, \theta'\circ\pi)$, where $\pi\colon \hat{U}\to \hat{U}/G$
is the natural projection, has ${\rm C}^s$ overlap with $\beta$.
The ${\rm C}^s$-atlas $\beta\cup (\hat{U}, G, \theta'\circ\pi)$ on $B\cup U$ is 
contained in $\alpha$, which contradicts the maximality of $(B,\beta)$.

To construct $\theta$, we use Lemma \ref{apucomb2} 
to obtain a
neighborhood ${\mathcal N}\subset {\rm C}^r_{G,{\rm S}}(W, \tilde{W}')$ of
$f\vert W\colon W\to \tilde{W}'$ with the following property: Whenever
$g_0\in {\mathcal N}$, the map $T(g_0)=g\colon \hat{U}\to \tilde{U}$ defined by
$$
g(x)= g_0(x),\,\,\, {\rm if} \,\,\, x\in W\,\,\, {\rm and}\,\,\, g(x)=
f(x)\,\,\, {\rm if}\,\,\, x\in \hat{U}\setminus W,
$$
is ${\rm C}^r$-differentiable and $G$-equivariant, and the resulting map
$$
T\colon {\mathcal N}\to {\rm C}^r_{G,\rm S}(\hat{U}, \tilde{U})
$$
is continuous. The set ${\rm Diff}^r_G(\hat{U},\tilde{U})$
of $G$-equivariant ${\rm C}^r$-diffeomorphisms is open in ${\rm C}^r_{G,{\rm S}}(
\hat{U},\tilde{U})$.
Since $T(f\vert W)$ is the diffeomorphism $f$, there is a neighborhood
${\mathcal N}_0\subset {\mathcal N}$ of $f\vert W$ such that $T({\mathcal N}_0)\subset
{\rm Diff}^r_G(\hat{U},\tilde{U})$. 
Now, by Theorem \ref{apucomb0}, there is a $G$-equivariant
${\rm C}^s$-diffeomorphism $\theta_0\in
{\mathcal N}_0$. The required map $\theta$ is then $T(\theta_0)$.
\end{proof}

Notice that for ${\rm C}^r$-differentiable quotient orbifolds the existence of a 
compatible ${\rm C}^s$-differential structure follows immediately from the
corresponding equivariant results.

\section{Triangulation theorem for differentiable orbifolds}
\label{real}

\noindent In this section we improve Theorem \ref{comba1} and
conclude that, in fact, every ${\rm C}^1$-orbifold has a compatible
real analytic structure.

\begin{lemma}
\label{realapu}
Let $G$ be a compact Lie group and let $M$ and $N$ be smooth
$G$-manifolds. Let $f\colon M\to N$ be a 
${\rm C}^\infty$-differentiable $G$-equivariant map and
let $U$ be a $G$-invariant open subset of $M$. Then there exists an open 
neighborhood ${\mathcal N}$ of $f\vert U$ in ${\rm C}^{\infty}_{G,{\rm C}}(U,N)$ such that
the following holds: If $h\in {\mathcal N}$ and $T(h)\colon M\to N$ is defined by
$$
T(h)(x)=h(x),\,\,\, {\rm if}\,\,\, x\in U\,\,\, {\rm and}\,\,\,
T(h)(x)=f(x),\,\,\, {\rm if}\,\,\, x\in M\setminus U,
$$
then $T(h)$ is a  ${\rm C}^\infty$-differentiable $G$-equivariant map. Furthermore,
$T\colon {\mathcal N}\to  {\rm C}^{\infty}_{G,{\rm C}}(M,N)$, $h\mapsto T(h)$,
is continuous.
\end{lemma}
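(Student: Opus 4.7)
My plan is to adapt the proof of Lemma~\ref{apucomb2} to the present smooth, compact-Lie-group, Cerf-topology setting. The overall structure of the argument is unchanged: construct a suitable neighborhood $\mathcal{N}$ of $f|U$ in $C^\infty_{G,{\rm C}}(U,N)$, verify that each $h \in \mathcal{N}$ yields a smooth equivariant extension $T(h)$, and finally show that $T$ is continuous.

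First I would handle equivariance. Since $U$ is $G$-invariant, so is $M\setminus U$. Because $h$ is $G$-equivariant on $U$ and $f$ is $G$-equivariant on $M\setminus U$, the piecewise definition of $T(h)$ is consistent with the $G$-action and produces a $G$-equivariant map on all of $M$. This step is purely set-theoretic and requires no topology, so the move from finite $G$ in Lemma~\ref{apucomb2} to compact Lie $G$ here is cost-free at this stage.

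Next I would construct $\mathcal{N}$ by specifying a basic Cerf neighborhood of $f|U$ that forces any $h \in \mathcal{N}$ to agree with $f|U$ outside a relatively compact subset of $U$ (or, more generally, to approach $f|U$ flatly as one nears $\partial U$ in $M$). For such $h$, the extension $T(h)$ coincides with $f$ on an open set containing $M\setminus U$ and is therefore automatically $C^\infty$ on $M$. Averaging the defining data of this neighborhood over the compact group $G$ via Haar measure yields a $G$-invariant neighborhood, and intersection with $C^\infty_{G,{\rm C}}(U,N)$ produces the desired equivariant $\mathcal{N}$. Continuity of $T$ then follows from the local character of the gluing and the corresponding local estimates, exactly as in the proof of Lemma~\ref{apucomb2}.

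The main obstacle is justifying that the Cerf topology admits the type of ``eventually equal to $f|U$'' or ``flat at $\partial U$'' neighborhoods required above. This is precisely the feature that distinguishes the Cerf topology from the Whitney $C^\infty$ topology and makes it the natural setting for smooth gluing and approximation; once this technical point is imported from the standard literature (the same body of results that underlies Theorem~\ref{apucomb0} and Theorem~\ref{apucomb1}), the rest of the argument mirrors Hirsch's proof of Lemma~2.2.8 cited in Lemma~\ref{apucomb2}, with equivariance tracked throughout by the Haar-averaging step.
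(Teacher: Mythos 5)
The paper disposes of this lemma with a single citation: it is Lemma 8.1 of Illman's paper on the very strong ${\rm C}^\infty$ topology (reference \cite{I}). You instead attempt to reconstruct the argument, and your reconstruction has a genuine gap at its center. The neighborhood you propose --- one that ``forces any $h\in\mathcal N$ to agree with $f\vert U$ outside a relatively compact subset of $U$'' --- does not exist: in the Cerf (very strong) topology, just as in the Whitney topology, a basic neighborhood of $f\vert U$ only bounds derivatives of $h-f$ on a locally finite family of compact sets by positive tolerances, so it always contains maps that differ from $f\vert U$ arbitrarily close to $\partial U$. Your conclusion that $T(h)$ ``coincides with $f$ on an open set containing $M\setminus U$ and is therefore automatically ${\rm C}^\infty$'' therefore rests on a false premise. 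The correct mechanism is the one you mention only parenthetically: choose compact sets accumulating at $\partial U$, with derivative orders $k_i\to\infty$ and tolerances $\epsilon_i\to 0$ fast enough that $h-f$ is forced to be flat at $\partial U$, and then verify smoothness of the glued map at boundary points directly from that flatness. The need for unbounded orders $k_i$ within a single neighborhood is precisely why the Cerf topology, rather than the Whitney ${\rm C}^\infty$ topology, appears in the statement; it is also the entire technical content of the lemma, together with the continuity of $T$, and both are exactly what you defer to ``the standard literature.'' Since that literature reference is Illman's Lemma 8.1 --- i.e.\ the paper's whole proof --- your write-up does not supply an independent argument for the two claims that actually need proving.

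Two smaller points. The Haar-averaging step is both undefined (one cannot ``average the defining data of a neighborhood'' to produce an open set, and an intersection of translates over an infinite compact group need not be open) and unnecessary: $\mathcal N$ is by hypothesis a neighborhood in the space ${\rm C}^\infty_{G,{\rm C}}(U,N)$ of equivariant maps, so every $h\in\mathcal N$ is already equivariant, and, as you correctly note, equivariance of $T(h)$ is then automatic because $U$ and $M\setminus U$ are $G$-invariant. That part of your argument is sound; the gap lies in the existence of $\mathcal N$ and the continuity of $T$.
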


\begin{proof} Lemma 8.1, in \cite{I}.
\end{proof}

\begin{theorem}
\label{combanal} 
Let $\alpha$ be a ${\rm C}^\infty$-differential structure on the
orbifold $X$. Then there exists a compatible
real analytic differential structure $\beta\subset\alpha$ on $X$. 
\end{theorem}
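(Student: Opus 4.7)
The plan is to mimic the Zorn's lemma argument used to prove Theorem \ref{comba1}, but with $(r,s)$ replaced by $(\infty,\omega)$ and with the Whitney topology replaced by the Cerf topology throughout. Let $\mathcal{B}$ denote the family of pairs $(B,\beta)$, where $B$ is an open subset of $X$ and $\beta$ is a real analytic structure on $B$ compatible with the $C^\infty$-structure $B$ inherits from $\alpha$. Ordering $\mathcal{B}$ by extension exactly as before, every chain has an upper bound obtained by taking the union of the domains and the maximal real analytic atlas generated by the union of the atlases. To see that $\mathcal{B}$ is nonempty, apply Theorem \ref{apucomb1} with $r=\infty$, $k=\omega$ to each basic open set (viewing it as a quotient of a real analytic $G$-action on an open subset of $\mathbb{R}^n$, which exists thanks to local smoothness plus the $r=\infty$, $k=\omega$ case of Theorem \ref{apucomb1}).

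Zorn's lemma then gives a maximal element $(B,\beta)$, and I would show $B = X$ by contradiction. Assume $B \ne X$ and pick a chart $(\tilde{U},G,\varphi)$ meeting both $B$ and $X\setminus B$. By Theorem \ref{apucomb1} (again with $r=\infty$, $k=\omega$) there is a real analytic $G$-manifold $\hat{U}\subset\mathbb{R}^n$ with a $G$-equivariant $C^\infty$-diffeomorphism $f\colon\hat{U}\to\tilde{U}$. Set $W' = U \cap B$, $\tilde{W}' = \varphi^{-1}(W')$ and $W = f^{-1}(\tilde{W}')$; the real analytic structure $\beta$ makes $\tilde{W}'$ into a real analytic $G$-manifold. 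The goal is to produce a $G$-equivariant $C^\infty$-diffeomorphism $\theta\colon\hat{U}\to\tilde{U}$ whose restriction $\theta|W\colon W\to\tilde{W}'$ is real analytic, because then $\theta$ descends to a chart $(\hat{U},G,\theta'\circ\pi)$ with real analytic overlap with $\beta$, contradicting maximality of $(B,\beta)$.

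To construct $\theta$, I would apply Lemma \ref{realapu} to obtain an open neighborhood $\mathcal{N}\subset C^\infty_{G,C}(W,\tilde{W}')$ of $f|W$ on which the extension operator $T$ (equal to $f$ outside $W$) is well-defined and continuous as a map into $C^\infty_{G,C}(\hat{U},\tilde{U})$. Since $T(f|W) = f$ is a $G$-equivariant $C^\infty$-diffeomorphism and the subspace of such diffeomorphisms is open in $C^\infty_{G,C}(\hat{U},\tilde{U})$, I shrink to a neighborhood $\mathcal{N}_0\subset\mathcal{N}$ of $f|W$ with $T(\mathcal{N}_0)\subset\mathrm{Diff}^\infty_G(\hat{U},\tilde{U})$. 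Using the density of $C^\omega_G(W,\tilde{W}')$ in $C^\infty_{G,C}(W,\tilde{W}')$ for finite $G$ recorded in the paragraph preceding Lemma \ref{apucomb2}, I then pick a real analytic $G$-equivariant $\theta_0\in\mathcal{N}_0$ and take $\theta = T(\theta_0)$.

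The main obstacle is ensuring that all the ingredients of the Theorem \ref{comba1} argument have valid Cerf-topology analogs for the smooth-to-analytic step: openness of $\mathrm{Diff}^\infty_G$ in $C^\infty_{G,C}(\hat{U},\tilde{U})$, continuity of $T$ in the Cerf topology, and equivariant density of real analytic maps in $C^\infty_{G,C}$. The first two are handled by Lemma \ref{realapu}; the third is quoted in the excerpt. Once these are in place the rest of the proof is a verbatim translation of Theorem \ref{comba1}, so the real work is entirely packaged into Lemma \ref{realapu} and the finite-group equivariant density statement.
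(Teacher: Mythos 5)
Your proposal is correct and follows exactly the route the paper takes: the paper's own proof is just the remark that one repeats the argument of Theorem \ref{comba1} with Lemma \ref{apucomb2} replaced by Lemma \ref{realapu}, and you have simply written out that translation in full, correctly noting also that the role of Theorem \ref{apucomb0} is taken over by the density of ${\rm C}^\omega_G(W,\tilde{W}')$ in ${\rm C}^\infty_{G,{\rm C}}(W,\tilde{W}')$ recorded before Lemma \ref{apucomb2}. No further comment is needed.
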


\begin{proof}
The proof is similar to the proof of  Theorem \ref{comba1}. 
The reference to Lemma \ref{apucomb2}  should be replaced by a reference to
Lemma \ref{realapu}.
\end{proof}

Theorems \ref{comba1} and \ref{combanal} imply:

\begin{cor}
\label{allcomb}
Let $\alpha$ be a ${\rm C}^1$-differential structure on the orbifold
$X$. Then there exists a compatible real analytic structure
$\beta\subset\alpha$ on $X$.
\end{cor}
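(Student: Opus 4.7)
The plan is simply to chain Theorem \ref{comba1} and Theorem \ref{combanal} together, using the transitivity of the compatibility relation $\subset$ on maximal atlases.

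First, I would apply Theorem \ref{comba1} to the given $\mathrm{C}^1$-structure $\alpha$ with $r = 1$ and $s = \infty$. This produces a compatible $\mathrm{C}^\infty$-differential structure $\gamma \subset \alpha$ on $X$. In particular, $\gamma$ is a maximal $\mathrm{C}^\infty$-atlas whose charts are all $\mathrm{C}^1$-compatible with the charts of $\alpha$.

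Next, I would apply Theorem \ref{combanal} to the $\mathrm{C}^\infty$-structure $\gamma$ just constructed. This yields a compatible real analytic differential structure $\beta \subset \gamma$ on $X$. Combining the two inclusions gives $\beta \subset \gamma \subset \alpha$, so $\beta$ is a real analytic structure contained in $\alpha$, as required.

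The only thing to verify is the transitivity of compatibility, namely that $\beta \subset \gamma$ and $\gamma \subset \alpha$ imply $\beta \subset \alpha$. This is immediate from the definition given at the start of Section \ref{comp}: compatibility means literal containment of maximal atlases, and containment of sets is transitive. There is no real obstacle here; the two theorems have been stated in the precise form needed for the composition to go through, so the corollary reduces to a one-line invocation of each.
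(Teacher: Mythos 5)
Your proposal is correct and matches the paper's argument exactly: the paper derives Corollary \ref{allcomb} by applying Theorem \ref{comba1} (with $s=\infty$) and then Theorem \ref{combanal}, using transitivity of the containment of maximal atlases just as you describe.
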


In other words:

\begin{theorem} 
\label{combanal2} Let $1\leq r\leq \infty$. Then
every ${\rm C}^r$-orbifold is ${\rm C}^r$-diffeomorphic 
to a ${\rm C}^\omega$-orbifold.
\end{theorem}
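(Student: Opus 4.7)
The plan is to recognize Theorem \ref{combanal2} as essentially a repackaging of Corollary \ref{allcomb}. Given a $\mathrm{C}^r$-orbifold $X$ with maximal atlas $\alpha$, I would produce a compatible real analytic structure $\beta \subset \alpha$ and then interpret that inclusion as a $\mathrm{C}^r$-diffeomorphism onto a real analytic orbifold.

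First I would split on the value of $r$. When $r = \infty$, Theorem \ref{combanal} applies directly and yields a real analytic $\beta \subset \alpha$. When $1 \le r < \infty$, I would apply Theorem \ref{comba1} with $s = \infty$ to get a compatible $\mathrm{C}^\infty$-structure $\gamma \subset \alpha$, and then apply Theorem \ref{combanal} to the smooth orbifold $X(\gamma)$ to obtain a compatible real analytic structure $\beta \subset \gamma$. Transitivity of atlas inclusion then gives $\beta \subset \alpha$, which is exactly the content of Corollary \ref{allcomb}.

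Finally, I would invoke the translation spelled out at the start of Section \ref{comp}: compatibility $\beta \subset \alpha$ is by definition equivalent to saying that the identity map of $X$ is a $\mathrm{C}^r$-orbifold diffeomorphism $X(\alpha) \to X(\beta)$. Since $X(\beta)$ is a $\mathrm{C}^\omega$-orbifold, this exhibits the given $\mathrm{C}^r$-orbifold $X = X(\alpha)$ as $\mathrm{C}^r$-diffeomorphic to a real analytic orbifold.

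No serious obstacle remains, since all the analytic and equivariant approximation work has already been carried out in Theorems \ref{comba1} and \ref{combanal}. The only two small items worth verifying explicitly are that the chained inclusion $\beta \subset \gamma \subset \alpha$ still yields a $\mathrm{C}^r$ (rather than merely $\mathrm{C}^\infty$) diffeomorphism of the identity map, which is immediate since every chart of $\alpha$ is a $\mathrm{C}^r$-chart and $\mathrm{C}^\omega \subset \mathrm{C}^\infty \subset \mathrm{C}^r$ overlaps are in particular $\mathrm{C}^r$ overlaps, and that the inverse identity map $X(\beta) \to X(\alpha)$ is also $\mathrm{C}^r$, which follows symmetrically.
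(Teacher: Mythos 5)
Your proposal is correct and follows the same route as the paper, which presents Theorem \ref{combanal2} as a direct restatement ("in other words") of Corollary \ref{allcomb}, itself obtained by chaining Theorem \ref{comba1} (with $s=\infty$) and Theorem \ref{combanal}. Your explicit unpacking of the compatibility $\beta\subset\alpha$ as the identity map being a ${\rm C}^r$-orbifold diffeomorphism $X(\alpha)\to X(\beta)$ is exactly the translation the paper relies on at the start of Section \ref{comp}.
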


Let $X$ be a ${\rm C}^r$-orbifold, $1\leq r\leq\infty$. If there is a real
analytic orbifold $Y$ and a ${\rm C}^r$ orbifold diffeomorphism 
$f\colon Y\to X$, then a subanalytic triangulation of $Y$ induces
a triangulation of $X$. We call such a triangulation of $X$
"subanalytic".

\begin{theorem}
\label{smoothtri}
Let $X$ be a ${\rm C}^r$-orbifold, $1\leq r\leq\infty$. Then $X$ has
a "subanalytic" triangulation.
\end{theorem}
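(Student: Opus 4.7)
The plan is to combine Theorem \ref{combanal2} with Theorem \ref{thetheorem}, exactly in the spirit of the definition of a ``subanalytic'' triangulation given just before the statement. Given a ${\rm C}^r$-orbifold $X$ with $1\le r\le\infty$, Theorem \ref{combanal2} (which itself is the concatenation of Theorem \ref{comba1} and Theorem \ref{combanal}) produces a real analytic orbifold $Y$ together with a ${\rm C}^r$ orbifold diffeomorphism $f\colon Y\to X$. This is exactly the setup under which a ``subanalytic'' triangulation of $X$ is declared to exist.

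Next, I would invoke Theorem \ref{thetheorem} to obtain a subanalytic triangulation of the real analytic orbifold $Y$: a simplicial complex $K$ together with a homeomorphism $\tau\colon |K|\to Y$ whose inverse $\tau^{-1}\colon Y\to |K|$ is subanalytic. The composition $f\circ\tau\colon |K|\to X$ is then a homeomorphism, and by the definition recalled immediately before Theorem \ref{smoothtri}, this composition is by convention a ``subanalytic'' triangulation of $X$.

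There is essentially no obstacle here beyond unpacking the definition: the content has already been absorbed into Theorems \ref{combanal2} and \ref{thetheorem}, and the word ``subanalytic'' is placed in quotation marks precisely because $X$ carries no a priori real analytic structure, so ``subanalyticity'' is transported via $f$. If one wished to additionally record compatibility with strata, one could note that the local groups, and hence the stratification $\{X_{(H)}\}$, are preserved by the ${\rm C}^r$ orbifold diffeomorphism $f$, so the triangulation $\tau$ of $Y$ compatible with $\{Y_{(H)}\}$ furnished by Theorem \ref{thetheorem} pushes forward to a triangulation of $X$ compatible with $\{X_{(H)}\}$. The proof can therefore be given in just a few lines.
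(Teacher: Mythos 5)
Your proposal is correct and follows exactly the paper's own argument: the paper's proof is the one-line observation that the result follows immediately from Theorems \ref{combanal2} and \ref{thetheorem}, which is precisely the combination you spell out. Your additional remark about transporting the stratification along the ${\rm C}^r$ orbifold diffeomorphism is a harmless (and accurate) elaboration beyond what the paper records.
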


\begin{proof}
The result follows immediately from Theorems \ref{combanal2}
and   \ref{thetheorem}.
\end{proof}

\end {document}